\newtheorem{theorem}{Theorem}[section]
\newtheorem{counterexample}{Counterexample}[section]
\newtheorem{example}{Example}[section]
\newtheorem{definition}{Definition}[section]
\newtheorem{lemma}{Lemma}[section]
\newtheorem{remark}{Remark}[section]
\newtheorem{proposition}{Proposition}[section]
\newtheorem{assumption}{Assumption}[section]
\newcolumntype{x}{>{\raggedright\arraybackslash}X}
\begin{document}

\title{\bf Stochastic comparisons of finite mixtures with general exponentiated location-scale distributed components}
\author{\small {Raju {\bf Bhakta}$^{1}$\thanks {E-mail address (corresponding author): bhakta.r93@gmail.com, Raju.Bhakta@niituniversity.in},~Kaushik {\bf Gupta}$^{2}$\thanks {E-mail address: kaushikgupta2001@gmail.com},~Ghobad Saadat Kia {\bf (Barmalzan)}$^{3}$\thanks {E-mail address: ghsaadatkia@gmail.com, Gh.saadatkia@kut.ac.ir}~and~Suchandan {\bf {Kayal}$^{4}$\thanks {E-mail address: kayals@nitrkl.ac.in,~suchandan.kayal@gmail.com}}}}
\date{}
\maketitle
\noindent{\it$^{1}$Department of Mathematics and Basic Sciences, NIIT University, NH-8, Delhi-Jaipur Highway, Neemrana, Rajasthan-301705, India.}\\
\noindent{\it$^{2,4}$Department of Mathematics, National Institute of Technology Rourkela, Sector 1, Rourkela, Sundargarh, Odisha-769008, India.}\\
\noindent{\it$^{3}$Department of Basic Science, Kermanshah University of Technology, Kermanshah Province, Kermanshah, Imam Khomeyni Highway, 83GW+3H3, IranKermanshah, Iran.}	
\vspace*{.5cm}
\begin{center}
\noindent{\bf Abstract}	
\end{center}
In this paper, we study stochastic ordering results between two finite mixtures with single and multiple outliers, assuming subpopulations follow general exponentiated location-scale distributions. For single-outlier mixtures, several sufficient conditions are derived under which the mixture variables are ordered in the usual stochastic, reversed hazard rate, and likelihood ratio orders, using majorization concepts. For multiple-outlier mixtures, results are obtained for the reversed hazard rate, likelihood ratio, and ageing faster orders in reversed hazard rate. Numerical examples and counterexamples are presented to illustrate and support the established theoretical findings.
\\
\\
\noindent{\bf Keywords:} Ageing faster in reversed hazard rate; Exponentiated location-scale family; Finite mixtures; Majorization orders; Multiple-Outlier models; Stochastic orders.\\\\
\noindent {\bf Mathematics Subject Classification:} 60E15, 90B25.
\begin{table*}[h!]\label{table:1}
\centering
\begin{tabular}{llll}
\toprule
&~~~~~~~~~~~~~~~~~~~~~~~~~~~~~~\textbf{Abbreviations}&&\\
\midrule
RV & Random Variable & ELS & Exponentiated Location-Scale\\
MRV & Mixture Random Variable &	AFO & Ageing Faster Order\\
FM & Finite Mixture & PDF & Probability Density Function\\
FMRV & Finite Mixture Random Variable & CDF & Cumulative Distribution Function\\
MM & Mixture Model & SF & Survival Function\\
FMM & Finite Mixture Model & RHRF & Reversed Hazard Rate Function\\ 
\bottomrule
\end{tabular}
\end{table*}	
\section{Introduction}\label{section1}
For a considerable amount of time, academics have focused on the study of various problems in probability and statistics assuming that the population is homogeneous. But in reality, populations are not inherently homogeneous; rather, they are composed of information coming from several heterogeneous subpopulations. FMMs are widely employed in several scientific and technological fields for better modelling and inferences. \cite{finkelstein2008failure} and \cite{cha2013failure} discuss how various batches of components might be heterogeneous due to factors such as environment, materials, machinery, and human error. Note that the mass-product components form a heterogeneous population with various subpopulations. A randomly selected batch of components may belong to one of these subpopulations with varying probability. The FMM allows  modelling the lifetime data from a limited number of subpopulations.
	
\cite{finkelstein2006mixture} found that ordered mixing distributions deal with ordering mixture failure rates. \cite{navarro2008mean} studied ordering properties for coherent systems in which components are dependent and identically distributed. The results are independent of the distribution of the common components.
\cite{navarro2013stochastic} defined suitable criteria in order to compare the hazard rate and likelihood ratio orders of generalized mixtures. \cite{balakrishnan2014stochastic} investigates the stochastic comparison of series and parallel systems with s-independent heterogeneous generalized exponential components. \cite{navarro2016stochastic} define suitable criteria for comparing the likelihood ratio and hazard rate orders of generalized mixtures. Then \cite{amini2017stochastic} compared conventional stochastic orders to two classical FMMs. They conducted stochastic comparisons between two FMMs with ordered baseline RVs. The baseline distributions are organized in a stochastic order, such as $F_1 \geq_{hr}\ldots\geq_{hr} F_n$. \cite{navarro2017stochastic} discovered the necessary and sufficient criteria for stochastic comparisons of generalized distorted distributions with ordered baseline distribution functions. The findings for generalized distorted distributions may be applied to FMs with component-wise order requirements like $F_1 \geq_{st}\ldots\geq_{st} F_n$. \cite{hazra2018stochastic} employed multivariate chain majorization to compare stochastic results for two FMs using proportional reversed hazard rate, proportional hazard rate,  or accelerated lifetime  models for RVs. \cite{barmalzan2021comparisons} compare two finite $\alpha$ MMs using various stochastic orders. \cite{sattari2021stochastic} develop some comparison results when  generalized Lehmann a distribution is followed by components of two FMMs. \cite{panja2021stochastic} compared two FMMs whenever the components have proportional odds, proportional reversed hazard rate and proportional hazards rate models. \cite{barmalzan2022orderings} examined the outcomes of a stochastic comparison in relation to both the usual stochastic order and the reversed hazard rate order when components are from the location-scale family with two FMMs. \cite{bhakta2024stochasticcomparisons} studied stochastic comparison results with respect to reversed hazard rate order, hazard rate order and usual stochastic order  after taking two FMMs when components follows the general parametric family. Recently, \cite{bhakta2024stochasticorderings} established stochastic comparison with respect to usual stochastic order, reversed hazard rate order, likelihood ratio order and AFO in terms of reversed hazard rate order for components having two FMMs with inverted-Kumaraswamy distributed.

\cite{finkelstein2006mixture} addressed ordering mixture failure rates using ordered mixing distributions. \cite{navarro2008mean} studied ordering properties in coherent systems with dependent and identically distributed components, showing their results were independent of the common component distribution. \cite{navarro2013stochastic} proposed criteria to compare hazard rate and likelihood ratio orders in generalized mixtures. Similarly, \cite{balakrishnan2014stochastic} analyzed stochastic comparisons in series and parallel systems with independent, heterogeneous generalized exponential components. Building on this, \cite{navarro2016stochastic} introduced methods for comparing likelihood ratio and hazard rate orders in generalized mixtures. \cite{amini2017stochastic} extended these ideas by comparing stochastic orders in two FMMs, focusing on baseline RVs ordered as \(F_1 \geq_{hr} \ldots \geq_{hr} F_n\). \cite{navarro2017stochastic} provided necessary and sufficient conditions for stochastic comparisons in generalized distorted distributions, applicable to FMs with component-wise ordering such as \(F_1 \geq_{st} \ldots \geq_{st} F_n\). \cite{hazra2018stochastic} utilized multivariate chain majorization to compare two FMs under proportional reversed hazard rate, proportional hazard rate, and accelerated lifetime models. Further, \cite{barmalzan2021comparisons} studied finite \(\alpha\) MMs using various stochastic orders, while \cite{sattari2021stochastic} compared FMMs with generalized Lehmann-distributed components. Similarly, \cite{panja2021stochastic} analyzed FMMs under proportional odds, proportional reversed hazard rate, and proportional hazard rate models. \cite{barmalzan2022orderings} examined stochastic comparisons for the usual stochastic order and reversed hazard rate order, focusing on components from the location-scale family in two FMMs. \cite{bhakta2024stochasticcomparisons} extended these comparisons to reversed hazard rate, hazard rate, and usual stochastic orders for general parametric families. Recently, \cite{bhakta2024stochasticorderings} explored stochastic comparisons for two FMMs with inverted-Kumaraswamy-distributed components, using usual stochastic order, reversed hazard rate order, likelihood ratio order, and AFO in terms of reversed hazard rate order.

Let $\boldsymbol{X}=(X_1,\ldots,X_n)$ be a random vector with $n$ components, assuming the $i$th component is drawn from the $i$th sub-population. Assume that there are $n$ homogeneous infinite sub-populations of units and that $X_i$ denotes the lifetime of a unit in the $i$th sub-population, where $i\in\mathcal{I}_n$. The mixture of units drawn from these $n$ sub-populations is then represented by the RV $U_n$. Denote the SF, CDF and PDF of the $i$th RV $X_i$, $i\in\mathcal{I}_n$ by $\bar{F}_i(\cdot)$, $F_i(\cdot)$, and $f_i(\cdot)$, respectively. Now, the SF, CDF, and PDF of mixture RV $U_n$ are respectively as          
	
\begin{eqnarray}\label{eq1.1}
\bar{F}_{U_n}(x)=\sum_{i=1}^{n}r_i\bar{F}_i(x),~F_{U_n}(x)=\sum_{i=1}^{n}r_iF_i(x),~\mbox{and}~f_{U_n}(x)=\sum_{i=1}^{n}r_if_i(x),
\end{eqnarray}
where $\boldsymbol{r}=(r_1,\ldots,r_n)$ are the mixing proportions (weights) such as $\sum_{i=1}^{n}r_i=1$ and $r_i> 0$, for $i\in\mathcal{I}_n$. FMMs have several applications due to their capability to incorporate heterogeneity into studies. For instance, as discussed by \cite{panja2021stochastic} and \cite{hazra2018stochastic}, the lifetime distribution of a component operating in an environment with various stress levels (e.g., compression, voltage, and temperature) is a mixture distribution. This distribution represents a mixture of the underlying distributions of components under different stress levels. \cite{karlis2016finite} demonstrated that FMM is an effective method to address both heterogeneity and clustering in right-censored count data. Another practical example is that stock or asset returns are often modeled as a mixture of RVs (see \cite{kocuk2020incorporating}). For more details on FMMs, including various other applications, one may refer to \cite{amini2017stochastic}, \cite{everitt1981finite}, \cite{finkelstein2006mixture}, and \cite{franco2014generalized}.
	
In this paper, we have considered two FMMs for the ELS family of distributions. We refer to the baseline distributions as the ELS family of distributions throughout the study. A RV $X$ is said to follow ELS family of distributions if its CDF is given by   
\begin{eqnarray}\label{eq1.2}
F_X(x)\equiv F(x;\alpha,\sigma,\lambda)=F^{\alpha}\left(\frac{x-\sigma}{\lambda}\right),~x>\sigma+c\lambda,~\alpha>0,~\lambda>0,
\end{eqnarray}
where $c\in\mathbb{R}^+$, $\sigma\in\mathbb{R}^+$ is known as the location parameter and $\lambda$ is known as the scale parameter. In this case, $F(t)$ with $t\in(c,\infty)$ is the baseline CDF. For convenience, we denote $X\sim ELS(F;\alpha,\sigma,\lambda)$ if $X$ has the CDF given in (\ref{eq1.2}). Specifically, \cite{barmalzan2022orderings} explored LS distributed components within a FMM framework, where they considered the support of the baseline CDF $F(t)$ is as $t\in(0,\infty)$. However, this support is not suitable for the Pareto distribution because of its support is $t\geq1$. In this context, if we consider the Pareto distribution, then most of the established results in \cite{barmalzan2022orderings} are not valid within the support $0<t<1$. So, in contrast, the baseline CDF utilized in this study accommodates the Pareto distribution and other distributions that hold for $c=0$. Therefore, this flexibility in supporting distributions with a lower bound shift serves as the primary motivation for adopting the support $t\in(c, \infty)$. In addition, a FMM with ELS distributed components offers significant flexibility for modeling complex, skewed, and heavy-tailed data. It effectively captures multi-modal patterns and accommodates heterogeneous subpopulations. This model enhances robustness to outliers and improves clustering and classification tasks. In reliability analysis, it models diverse hazard functions, while in finance and insurance, it accurately assesses extreme risks. Its adaptability makes it valuable across various fields.
	
The established stochastic ordering results for ELS models on FMMs offer a novel contribution by extending the theoretical framework of stochastic comparisons to a broader class of flexible distributions. These ELS models generalize classical distributions by incorporating shape parameters through exponentiation, allowing for improved modeling of skewness and tail behavior. The research advances existing literature by rigorously identifying conditions under which various stochastic orderings (for instance, usual stochastic order, hazard rate order, and likelihood ratio order) are preserved between FMs of ELS models. This is significant because such results were previously limited to simpler or non-exponentiated distribution families.
	
The implications are substantial for applied probability and statistical modeling, especially in areas such as reliability engineering, risk assessment, survival analysis, and economics. For instance, the ordering results can be used to compare the reliability of complex systems modeled as mixtures of lifetime distributions or to assess risk in portfolios modeled by mixtures of income distributions. These findings enable practitioners to make informed decisions based on stochastic dominance, even when dealing with complex data distributions that require the flexibility of ELS models. Moreover, the results support improved model selection, hypothesis testing, and inference in scenarios where MMs are used to represent heterogeneous populations.
	
The paper's outline is as follows. In Section \ref{section2}, we present some essential notions, definitions, and key lemmas that will be used throughout the study. Section \ref{section3} provides sufficient conditions for comparing two single-outlier FMMs with ELS family distributed components in terms of the usual stochastic order, reversed hazard rate order, and likelihood ratio order. In Section \ref{section4}, stochastic comparison results between two multiple-outlier FMMs are established in the sense of the reversed hazard rate order, likelihood ratio order, and AFO in terms of the reversed hazard rate. Section \ref{section5} contains several illustrative examples and counterexamples, aiming to elucidate the theoretical results. Finally, Section \ref{section6} has some concluding remarks.
	
All the RVs are nonnegative and absolutely continuous.  The words ``increasing'' and ``decreasing'' respectively mean ``nondecreasing'' and ``nonincreasing''. We  write ``$a\stackrel{sign}{=}b$'' to indicate that the signs of $a$ and $b$ are the same. For a continuously differentiable function $\phi(t)$, the first order derivative with respect to $t$ is represented by $\phi^\prime(t)$.
	
\section{Preliminaries}\label{section2}
In this section, we provide some fundamental concepts and important lemmas which will be helpful in subsequent developments.
\begin{table}[h!]\label{table:2}
\centering
\begin{tabular}{llll}
\toprule
&~~~~~~~~~~~~~~~~~~~~~~~~~~~~~~~~\textbf{Notation}&&\\
\midrule
$n$ & Number of variables & $\mathbb{R}^n$ & $\left\lbrace \boldsymbol{a}=(a_1,\ldots,a_n):~a_i\in\mathbb{R},\forall~i\in\mathcal{I}_n\right\rbrace $\\
$\mathcal{I}_{n-1}$ & $\left\lbrace 1,\ldots,n-1\right\rbrace$ & $\mathcal{E}_n$ & $\left\lbrace\boldsymbol{u}=(u_1,\ldots,u_n)\in\mathbb{R}^n:~u_1\leq\ldots\leq u_n\right\rbrace$\\
$\mathcal{I}_{n}$ & $\left\lbrace 1,\ldots,n\right\rbrace$ & $\mathcal{E}_n^+$ & $\left\lbrace\boldsymbol{v}=(v_1,\ldots,v_n)\in\mathbb{R}^n:~0\leq v_1\leq\ldots\leq v_n\right\rbrace$\\
$\mathbb{R}$ & $\left(-\infty,+\infty\right)$ & $\mathcal{D}_n$ & $\left\lbrace \boldsymbol{u}=(u_1,\ldots,u_n)\in\mathbb{R}^n:~u_1\geq\ldots\geq u_n\right\rbrace$\\
$\mathbb{R}^+$ & $[0,+\infty)$ & $\mathcal{D}_n^+$ & $\left\lbrace \boldsymbol{v}=(v_1,\ldots,v_n)\in\mathbb{R}^n:~v_1\geq\ldots\geq v_n\geq 0\right\rbrace $\\
\bottomrule
\end{tabular}
\end{table}
Suppose $X$ and $Y$ are two absolutely continuous and nonnegative RVs with PDFs $f_X(\cdot)$ and $f_Y(\cdot)$, CDFs $F_X(\cdot)$ and $F_Y(\cdot)$, SFs $\bar{F}_X(\cdot)\equiv 1-F_X(\cdot)$ and $\bar{F}_Y(\cdot)\equiv 1-F_Y(\cdot)$, and RHRFs $\tilde{h}_X(\cdot)\equiv f_X(\cdot)/F_X(\cdot)$ and $\tilde{h}_Y(\cdot)\equiv f_Y(\cdot)/F_Y(\cdot)$, respectively. Below, we present definitions of various stochastic orders. 
    
\begin{definition}
A RV $X$ is said to be smaller than $Y$ in the sense of the
\begin{itemize}
\item usual stochastic order (denoted as $X\leq_{st}Y$) if $F_X(x)\geq F_Y(x)$, for all $x\in\mathbb{R}^+,$ or equivalently, if $\bar{F}_X(x)\leq\bar{F}_Y(x)$, for all $x\in \mathbb{R}^+$;  
\item reversed hazard rate order (denoted as $X\leq_{rh}Y$) if $F_Y(x)/F_X(x)$ is increasing in $x$, for all $x\in\mathbb{R}^+$, or equivalently, if $\tilde{h}_X(x)\leq \tilde{h}_Y(x)$, for all $x\in\mathbb{R}^+$;  
\item likelihood ratio order (denoted as $X\leq_{lr}Y$) if $f_Y(x)/f_X(x)$ is increasing in $x$, for all $x\in\mathbb{R}^+$.
\end{itemize}   
\end{definition}
	
\begin{definition}\label{def2.2}
A RV $X$ is said to be ageing faster than $Y$ in terms of the reversed hazard rate (denoted by $X\leq_{R-rh}Y$), if $\tilde{h}_X(x)/\tilde{h}_Y(x)$ is increasing in $x\in\mathbb{R^+}$. 
\end{definition}
	
It is known that $X\leq_{lr}Y\Rightarrow X\leq_{rh}Y\Rightarrow X\leq_{st}Y$. See \cite{shaked2007stochastic}, \cite{li2013stochastic}, and \cite{muller2002comparison} for further information on stochastic orders and their applications. We introduce the idea of majorization order below. Please see \cite{marshall2011inequalities} for more information.

\begin{definition}
Let ${\boldsymbol{x}}=(x_1,\ldots,x_n)\in\mathbb{R}^n$ and ${\boldsymbol{y}}=(y_1,\ldots,y_n)\in\mathbb{R}^n$ be two vectors, with $x_{(1)}\leq\ldots\leq x_{(n)}$ representing the increasing arrangements of the components of $\boldsymbol{x}$. A vector $\boldsymbol{x}$ is supposed to be majorized via the vector $\boldsymbol{y}$ (denoted as ${\boldsymbol{x}}\stackrel{m}{\preccurlyeq}{\boldsymbol{y}}$) if
$\sum_{i=1}^{j}x_{(i)}\geq\sum_{i=1}^{j} y_{(i)}$, for all $j\in\mathcal{I}_{n-1}$ and $\sum_{i=1}^{n}x_{(i)}=\sum_{i=1}^{n}y_{(i)}$.
\end{definition}			
	
The following definition describes that the concept of majorization preserves Schur-convexity of a function. 
	
\begin{definition}\label{def2.4}
Let $\mathscr{A}\subseteq\mathbb{R}$. A function $\phi:\mathscr{A}^n\rightarrow\mathbb{R}$ is said to be Schur-convex (Schur-concave) on $\mathscr{A}^n$ if
\begin{eqnarray*}
\boldsymbol{x}\stackrel{m}{\preccurlyeq}\boldsymbol{y} \Rightarrow\phi(\boldsymbol{x})\leq(\geq)~\phi(\boldsymbol{y}),~\mbox{for all}~ \boldsymbol{x},\boldsymbol{y}\in\mathscr{A}^n.	
\end{eqnarray*}
\end{definition}
	
Now, let us introduce the subsequent lemmas, which will be utilized to obtain the main results.
	
\begin{lemma}\label{lemma2.3}
(\cite{marshall2011inequalities}, Theorem $A.3$, p. $83$) Consider the function $\zeta:\mathcal{D}_n\rightarrow\mathbb{R}$ is continuous on $\mathcal{D}_n$ and continuously differentiable on the interior of $\mathcal{D}_n$. Then, $\zeta$ is Schur-convex (Schur-concave) on $\mathcal{D}_n$, if and only if $\zeta_{(k)}(\boldsymbol{x})$ is decreasing (increasing) in $k\in\mathcal{I}_n$, for all $\boldsymbol{x}$ in the interior of $\mathcal{D}_n$, where  $\zeta_{(k)}(\boldsymbol{x})=\partial\zeta(\boldsymbol{x})/\partial x_k$.
\end{lemma}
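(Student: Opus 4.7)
The plan is to translate the monotonicity condition on the partial derivatives into the classical Schur--Ostrowski criterion adapted to the restricted domain $\mathcal{D}_n$. Because $\mathcal{D}_n$ is not permutation-invariant, symmetry plays no role; what is needed instead is the fact that any two points $\boldsymbol{x},\boldsymbol{y}\in\mathcal{D}_n$ satisfying $\boldsymbol{x}\stackrel{m}{\preccurlyeq}\boldsymbol{y}$ can be connected inside $\mathcal{D}_n$ by a finite sequence of T-transforms that act on adjacent coordinates. This reduces the problem to verifying the implication for a single infinitesimal transfer of direction $\boldsymbol{e}_j-\boldsymbol{e}_i$ with $i<j$, after which the general case follows by chaining.

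For the necessity direction, I would fix an interior point $\boldsymbol{x}\in\mathcal{D}_n$ and indices $i<j$, and consider the one-parameter perturbation
\begin{equation*}
\boldsymbol{x}(\epsilon)=\boldsymbol{x}+\epsilon(\boldsymbol{e}_j-\boldsymbol{e}_i),
\end{equation*}
where $\boldsymbol{e}_k$ is the $k$-th standard basis vector. The strict interior inequalities $x_1>\cdots>x_n$ guarantee that $\boldsymbol{x}(\epsilon)\in\mathcal{D}_n$ for $\epsilon>0$ small, and a short bookkeeping of the partial sums of the decreasing rearrangements shows $\boldsymbol{x}(\epsilon)\stackrel{m}{\preccurlyeq}\boldsymbol{x}$, since mass is transferred from a larger coordinate to a smaller one while the total is preserved. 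Schur-convexity then yields $\zeta(\boldsymbol{x}(\epsilon))\leq\zeta(\boldsymbol{x})$; dividing by $\epsilon$ and letting $\epsilon\downarrow 0$ gives $\zeta_{(j)}(\boldsymbol{x})-\zeta_{(i)}(\boldsymbol{x})\leq 0$, i.e.\ $\zeta_{(k)}(\boldsymbol{x})$ is decreasing in $k$.

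For the sufficiency direction, assuming $\zeta_{(k)}$ is decreasing in $k$ throughout the interior of $\mathcal{D}_n$, I would take $\boldsymbol{x},\boldsymbol{y}\in\mathcal{D}_n$ with $\boldsymbol{x}\stackrel{m}{\preccurlyeq}\boldsymbol{y}$ and construct a piecewise-linear path $\boldsymbol{\gamma}:[0,1]\to\mathcal{D}_n$ from $\boldsymbol{y}$ to $\boldsymbol{x}$ built from finitely many adjacent T-transforms, using the Hardy--Littlewood--P\'olya decomposition specialized to ordered sequences. On each smooth segment the tangent has the form $\boldsymbol{e}_j-\boldsymbol{e}_i$ with $i<j$, so
\begin{equation*}
\frac{d}{dt}\zeta(\boldsymbol{\gamma}(t))=\zeta_{(j)}(\boldsymbol{\gamma}(t))-\zeta_{(i)}(\boldsymbol{\gamma}(t))\leq 0
\end{equation*}
by hypothesis. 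Integrating over $[0,1]$ yields $\zeta(\boldsymbol{x})\leq\zeta(\boldsymbol{y})$, establishing Schur-convexity on $\mathcal{D}_n$; the Schur-concave case is entirely analogous, with inequalities reversed throughout.

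The main obstacle I anticipate is the construction and regularity of the connecting path: a generic Robin-Hood transfer between non-adjacent coordinates of $\boldsymbol{y}$ may push two intermediate entries past one another and drive the trajectory outside the cone $\mathcal{D}_n$, so one must restrict attention to transfers on adjacent indices (or, equivalently, to a sequence of tiny transfers that respect the ordering at every step). This is precisely the structural reason the criterion on $\mathcal{D}_n$ reduces to the index-monotonicity of $\zeta_{(k)}$ rather than the full pairwise Schur--Ostrowski inequality $(x_i-x_j)\bigl(\zeta_{(i)}(\boldsymbol{x})-\zeta_{(j)}(\boldsymbol{x})\bigr)\geq 0$ that characterizes Schur-convexity on a symmetric domain.
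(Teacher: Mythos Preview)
The paper does not supply its own proof of this lemma: it is stated as a citation to Theorem~A.3 of \cite{marshall2011inequalities} and used without argument. So there is no in-paper proof to compare against.

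That said, your outline is essentially the standard argument found in the cited reference. The necessity direction via the one-parameter perturbation $\boldsymbol{x}+\epsilon(\boldsymbol{e}_j-\boldsymbol{e}_i)$ is correct, and you have the majorization direction right (transferring mass from the larger coordinate $x_i$ to the smaller $x_j$ with $i<j$ in $\mathcal{D}_n$ yields $\boldsymbol{x}(\epsilon)\stackrel{m}{\preccurlyeq}\boldsymbol{x}$). For sufficiency, the reduction to adjacent T-transforms is exactly the device Marshall--Olkin--Arnold use to keep the connecting path inside the ordered cone, and your closing paragraph correctly identifies why this restriction is the crux of the argument on $\mathcal{D}_n$. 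One technical point worth tightening: the derivative hypothesis is only assumed on the interior, so when either endpoint $\boldsymbol{x}$ or $\boldsymbol{y}$ lies on the boundary of $\mathcal{D}_n$ you should invoke the assumed continuity of $\zeta$ on all of $\mathcal{D}_n$ to pass to the limit from interior approximants; this is routine but should be said explicitly.
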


The following lemma can be established using arguments similar to those in Theorem $A.3$ of \cite{marshall2011inequalities} (see p. $83$).
	
\begin{lemma}\label{lemma2.4}
Suppose the function $\zeta:\mathcal{E}_n\rightarrow\mathbb{R}$ is continuous on $\mathcal{E}_n$ and continuously differentiable on the interior of $\mathcal{E}_n$. Then, $\zeta$ is Schur-convex (Schur-concave) on $\mathcal{D}_n$, if and only if $\zeta_{(k)}(\boldsymbol{x})$ is increasing (decreasing) in $k\in\mathcal{I}_n$, for all $\boldsymbol{x}$ in the interior of $\mathcal{E}_n$.
\end{lemma}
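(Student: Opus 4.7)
My plan is to reduce Lemma \ref{lemma2.4} to Lemma \ref{lemma2.3} via the coordinate-reversal symmetry between $\mathcal{E}_n$ and $\mathcal{D}_n$. Define the involution $R:\mathbb{R}^n\to\mathbb{R}^n$ by $R(x_1,\ldots,x_n)=(x_n,x_{n-1},\ldots,x_1)$. Clearly $R$ is a continuous bijection carrying $\mathcal{E}_n$ onto $\mathcal{D}_n$, mapping interior to interior, and with $R^{-1}=R$. Given $\zeta:\mathcal{E}_n\to\mathbb{R}$ continuous on $\mathcal{E}_n$ and continuously differentiable on its interior, I would introduce the companion function $\tilde\zeta:\mathcal{D}_n\to\mathbb{R}$ by $\tilde\zeta(\boldsymbol{y})=\zeta(R(\boldsymbol{y}))$. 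By the chain rule, $\tilde\zeta$ is continuous on $\mathcal{D}_n$ and continuously differentiable on its interior, and
\begin{equation*}
\tilde\zeta_{(k)}(\boldsymbol{y})=\frac{\partial\tilde\zeta}{\partial y_k}(\boldsymbol{y})=\zeta_{(n+1-k)}(R(\boldsymbol{y})),\qquad k\in\mathcal{I}_n.
\end{equation*}

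Next I would observe that Schur-convexity is a permutation-invariant notion, because the majorization relation $\stackrel{m}{\preccurlyeq}$ depends only on the increasing (equivalently, decreasing) rearrangements of the coordinates. Consequently, for any $\boldsymbol{x},\boldsymbol{x}'\in\mathcal{E}_n$ one has $\boldsymbol{x}\stackrel{m}{\preccurlyeq}\boldsymbol{x}'$ if and only if $R(\boldsymbol{x})\stackrel{m}{\preccurlyeq}R(\boldsymbol{x}')$ in $\mathcal{D}_n$. Combining this with Definition \ref{def2.4}, I would conclude that $\zeta$ is Schur-convex (Schur-concave) on $\mathcal{E}_n$ if and only if $\tilde\zeta$ is Schur-convex (Schur-concave) on $\mathcal{D}_n$.

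Now I would apply Lemma \ref{lemma2.3} directly to $\tilde\zeta$ on $\mathcal{D}_n$: the function $\tilde\zeta$ is Schur-convex (Schur-concave) on $\mathcal{D}_n$ if and only if $\tilde\zeta_{(k)}(\boldsymbol{y})$ is decreasing (increasing) in $k\in\mathcal{I}_n$ throughout the interior of $\mathcal{D}_n$. Using the identity $\tilde\zeta_{(k)}(\boldsymbol{y})=\zeta_{(n+1-k)}(R(\boldsymbol{y}))$ and letting $\boldsymbol{x}=R(\boldsymbol{y})$ sweep the interior of $\mathcal{E}_n$, the monotonicity in $k$ gets reversed under $k\mapsto n+1-k$: $\tilde\zeta_{(k)}$ decreases in $k$ iff $\zeta_{(k)}$ increases in $k$, and symmetrically for the concave case. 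Chaining the two equivalences yields exactly the statement of Lemma \ref{lemma2.4}.

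The only place that requires any care is the bookkeeping in the third step, namely that the index reflection $k\mapsto n+1-k$ flips the sense of monotonicity; everything else is transparent once the reversal trick is in place. No new majorization machinery is required beyond permutation invariance, which is immediate from the definition.
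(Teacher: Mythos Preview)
Your argument is correct. The coordinate-reversal map $R$ is a linear bijection between $\mathcal{E}_n$ and $\mathcal{D}_n$ that preserves majorization (since $\stackrel{m}{\preccurlyeq}$ is permutation invariant), so transporting $\zeta$ to $\tilde\zeta=\zeta\circ R$ on $\mathcal{D}_n$ and invoking Lemma \ref{lemma2.3} is entirely legitimate. The chain-rule identity $\tilde\zeta_{(k)}=\zeta_{(n+1-k)}\circ R$ and the observation that $k\mapsto n+1-k$ reverses monotonicity close the loop cleanly. You are also implicitly (and correctly) reading the conclusion of the lemma as ``Schur-convex on $\mathcal{E}_n$''; the printed ``$\mathcal{D}_n$'' is an evident typo, since $\zeta$ is only defined on $\mathcal{E}_n$.

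The paper does not actually prove Lemma \ref{lemma2.4}; it simply states that the lemma ``can be established using arguments similar to those in Theorem $A.3$ of \cite{marshall2011inequalities}'', i.e., by rerunning the original Schur--Ostrowski-type proof with the ordered cone $\mathcal{E}_n$ in place of $\mathcal{D}_n$. Your route is genuinely different and arguably more economical: rather than redoing that argument, you exploit the symmetry between the two cones to reduce directly to Lemma \ref{lemma2.3}. The gain is that no analytic content from the Marshall--Olkin--Arnold proof has to be revisited; the cost is nil, since permutation invariance of majorization is immediate from its definition. Either approach yields the result, and both extend to $\mathcal{E}_n^+$ versus $\mathcal{D}_n^+$ as noted in Remark \ref{remark2.1}.
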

	
\begin{remark}\label{remark2.1}
The result present in Lemma \ref{lemma2.3} and \ref{lemma2.4} also hold if the spaces $\mathcal{D}_n^+$ and $\mathcal{E}_n^+$ are used in place of $\mathcal{D}_n$ and $\mathcal{E}_n$, respectively.
\end{remark}
	
\section{Stochastic comparisons between two single-outlier FMMs}\label{section3}
Here, we consider two different FMMs with general ELS distributed components and obtain some ordering results between them. We denote the PDF and CDF of the $i$-th subpopulation by $f(x;\alpha_i,\sigma_i,\lambda_i)$ and $F(x;\alpha_i,\sigma_i,\lambda_i)$, respectively, where $i\in\mathcal{I}_{n}.$ Further, let $U_n(\boldsymbol{r};\boldsymbol{\alpha},\boldsymbol{\sigma},\boldsymbol{\lambda})$ be the MRV, constructed from these subpopulations, where $\boldsymbol{r}=(r_1,\ldots,r_n)$, $\boldsymbol{\alpha}=(\alpha_1,\ldots,\alpha_n)$, $\boldsymbol{\sigma}=(\sigma_1,\ldots,\sigma_n)$, and $\boldsymbol{\lambda}=(\lambda_1,\ldots,\lambda_n)$. The CDF of $U_n(\boldsymbol{r};\boldsymbol{\alpha},\boldsymbol{\sigma},\boldsymbol{\lambda})$ is written as
\begin{eqnarray}\label{eq3.3}
F_{U_n(\boldsymbol{r};\boldsymbol{\alpha},\boldsymbol{\sigma},\boldsymbol{\lambda})}(x)=\sum_{i=1}^{n}r_iF^{\alpha_i}\left(\frac{x-\sigma_i}{\lambda_i}\right)I(x>\sigma_i+c\lambda_i),~~c\in\mathbb{R}^+,~\alpha_i,\lambda_i>0,~\sigma_i\in\mathbb{R},
\end{eqnarray}
where $F(\cdot)$ is the baseline CDF and $I(x>\sigma_i+c\lambda_i)$ is an indicator function of the following form:
\begin{eqnarray*}
I(x>\sigma_i+c\lambda_i)=
\begin{cases}
1 & \text{if}~x>\sigma_i+c\lambda_i;\\
0 & \text{if}~x\leq\sigma_i+c\lambda_i,\\
\end{cases}
\end{eqnarray*}
for $i\in\mathcal{I}_n$. In the following proposition, we establish that the PDF obtained after differentiating (\ref{eq3.3}) with respect to $x$ is a proper PDF of the MRV $U_n(\boldsymbol{r};\boldsymbol{\alpha},\boldsymbol{\sigma},\boldsymbol{\lambda})$.
	
\begin{proposition}
Let $\sum_{i=1}^{n}r_i=1$. The function
\begin{eqnarray}\label{eq2.1}
f_{U_n(\boldsymbol{r};\boldsymbol{\alpha},\boldsymbol{\sigma},\boldsymbol{\lambda})}(x)&=&\sum_{i=1}^{n}\frac{r_i\alpha_i}{\lambda_i}F^{\alpha_i-1}\left(\frac{x-\sigma_i}{\lambda_i}\right)f\left(\frac{x-\sigma_i}{\lambda_i}\right)I(x>\sigma_i+c\lambda_i)
\end{eqnarray}
is a proper PDF, where $f(\cdot)$ is the baseline PDF of the nonnegative RV with unbounded support.
\end{proposition}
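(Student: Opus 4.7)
The plan is to verify the two defining properties of a probability density function: nonnegativity on $\mathbb{R}$ and integration to unity.

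For nonnegativity, I would observe that for every $i \in \mathcal{I}_n$ the mixing weight $r_i>0$, the parameters $\alpha_i,\lambda_i>0$, the baseline CDF and PDF are nonnegative, and the indicator $I(x>\sigma_i+c\lambda_i)\in\{0,1\}$. Therefore each summand in (\ref{eq2.1}) is nonnegative, and so is $f_{U_n(\boldsymbol{r};\boldsymbol{\alpha},\boldsymbol{\sigma},\boldsymbol{\lambda})}(x)$.

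For the integral, the key observation is that the $i$-th summand is precisely $r_i$ times the density obtained by differentiating $F^{\alpha_i}((x-\sigma_i)/\lambda_i)\,I(x>\sigma_i+c\lambda_i)$, i.e. the density of an $ELS(F;\alpha_i,\sigma_i,\lambda_i)$ random variable. I would therefore split the integral termwise and compute
\begin{equation*}
\int_{-\infty}^{\infty} f_{U_n(\boldsymbol{r};\boldsymbol{\alpha},\boldsymbol{\sigma},\boldsymbol{\lambda})}(x)\,dx
= \sum_{i=1}^{n} r_i \int_{\sigma_i+c\lambda_i}^{\infty} \frac{\alpha_i}{\lambda_i}\,F^{\alpha_i-1}\!\left(\tfrac{x-\sigma_i}{\lambda_i}\right) f\!\left(\tfrac{x-\sigma_i}{\lambda_i}\right)dx.
\end{equation*}
After the linear substitution $t=(x-\sigma_i)/\lambda_i$, the $i$-th inner integral reduces to $\int_c^{\infty}\alpha_i F^{\alpha_i-1}(t)f(t)\,dt$. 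Recognising the integrand as $\frac{d}{dt}F^{\alpha_i}(t)$, this equals $F^{\alpha_i}(\infty)-F^{\alpha_i}(c)=1-0=1$, since by the stated support of the baseline distribution $F(c)=0$ and $F(\infty)=1$. Summing up gives $\sum_{i=1}^{n}r_i=1$, as required.

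The only real subtlety, and likely the main obstacle to a reader, is justifying the boundary evaluation $F^{\alpha_i}(c)=0$: this uses the convention that the baseline nonnegative RV has support $(c,\infty)$ so that $F(c)=\lim_{t\downarrow c}F(t)=0$, which covers both the $c=0$ case (e.g.\ exponential, gamma) and the shifted $c>0$ case (e.g.\ Pareto). Once this is in hand, the rest is a routine interchange of summation and integration together with a linear change of variables, so no further technical difficulty is anticipated.
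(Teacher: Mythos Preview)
Your proposal is correct and follows essentially the same approach as the paper: termwise nonnegativity, then splitting the integral and reducing each summand to $1$ via a change of variables. The only cosmetic difference is that the paper substitutes $z_i=F((x-\sigma_i)/\lambda_i)$ directly (yielding $\int_0^1 \alpha_i z_i^{\alpha_i-1}\,dz_i=1$), whereas you first pass to $t=(x-\sigma_i)/\lambda_i$ and then antidifferentiate $F^{\alpha_i}(t)$; both rely on $F(c)=0$ in the same way.
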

	
\begin{proof}
It is obvious that $f_{U_n(\boldsymbol{r};\boldsymbol{\alpha},\boldsymbol{\sigma},\boldsymbol{\lambda})}(x)\geq0.$ Further,
\begin{eqnarray*}
\int_{-\infty}^{+\infty}f_{U_n(\boldsymbol{r};\boldsymbol{\alpha},\boldsymbol{\sigma},\boldsymbol{\lambda})}(x)dx&=\displaystyle\sum_{i=1}^{n}\int_{\sigma_i+c\lambda_i}^{\infty}\frac{r_i\alpha_i}{\lambda_i}F^{\alpha_i-1}\left(\frac{x-\sigma_i}{\lambda_i}\right)f\left(\frac{x-\sigma_i}{\lambda_i}\right)dx.
\end{eqnarray*}
Let $F\left(\frac{x-\sigma_i}{\lambda_i}\right)=z_i,$ for $i\in\mathcal{I}_n$.
Thus, $\frac{1}{\lambda_i}f\left(\frac{x-\sigma_i}{\lambda_i}\right)dx=dz_i$. When $x=\sigma_i+c\lambda_i$ and $x=+\infty$ then $z_i=F(c)=0$ and $z_i=F(+\infty)=1$, respectively. Then,
\begin{eqnarray*}
\int_{-\infty}^{+\infty}f_{U_n(\boldsymbol{r};\boldsymbol{\alpha},\boldsymbol{\sigma},\boldsymbol{\lambda})}(x)dx=\sum_{i=1}^{n}\int_{0}^{1}r_i\alpha_i z_i^{\alpha_i-1}dz_i
=\sum_{i=1}^{n}r_i
=1.
\end{eqnarray*}
Thus, $f_{U_n(\boldsymbol{r};\boldsymbol{\alpha},\boldsymbol{\sigma},\boldsymbol{\lambda})}(x)$ is a proper PDF.
\end{proof}
	
In the first theorem, the usual stochastic order is established for two FMMs of heterogeneous general ELS distributed components, assuming that the distributional parameter vectors are connected by inequalities. Here, we have considered a common mixing proportion parameter vector for two FMMs.

\begin{theorem}\label{theorem3.1}
Let $\bar{F}_{U_n(\boldsymbol{r};\boldsymbol{\alpha},\boldsymbol{\sigma},\boldsymbol{\lambda})}(x)=1-\sum_{i=1}^{n}r_iF^{\alpha_i}\left(\frac{x-\sigma_i}{\lambda_i}\right)I(x>\sigma_i+c\lambda_i)$ and
$\bar{F}_{U_n(\boldsymbol{r};\boldsymbol{\beta},\boldsymbol{\mu},\boldsymbol{\theta})}(x)=1-\sum_{i=1}^{n}r_iF^{\beta_i}(\frac{x-\mu_i}{\theta_i})I(x>\mu_i+c\theta_i)$ be the SFs of two MRVs $U_n(\boldsymbol{r};\boldsymbol{\alpha},\boldsymbol{\sigma},\boldsymbol{\lambda})$ and $U_n(\boldsymbol{r};\boldsymbol{\beta},\boldsymbol{\mu},\boldsymbol{\theta})$, respectively. Assume that $\boldsymbol{\sigma},\boldsymbol{\lambda},\boldsymbol{\mu},\boldsymbol{\theta}\in\mathcal{E}_{n}^{+}$. Then, we have 
\begin{eqnarray*}
U_n(\boldsymbol{r};\boldsymbol{\alpha},\boldsymbol{\sigma},\boldsymbol{\lambda})\leq_{st}U_n(\boldsymbol{r};\boldsymbol{\beta},\boldsymbol{\mu},\boldsymbol{\theta}),
\end{eqnarray*}
provided $\alpha_i\leq\beta_i$, $\sigma_i\leq\mu_i$, and $\lambda_i\leq\theta_i$, for $i\in\mathcal{I}_{n}.$
\end{theorem}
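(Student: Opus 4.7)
The plan is to prove the ordering directly from the definition of the usual stochastic order: since $X\leq_{st}Y$ is equivalent to $F_X(x)\geq F_Y(x)$ for every $x\in\mathbb{R}^+$, and since the mixing weights $r_i>0$ are identical on the two sides, it is enough to establish the term-by-term inequality
\begin{equation*}
F^{\alpha_i}\!\left(\frac{x-\sigma_i}{\lambda_i}\right)I(x>\sigma_i+c\lambda_i)\;\geq\;F^{\beta_i}\!\left(\frac{x-\mu_i}{\theta_i}\right)I(x>\mu_i+c\theta_i)
\end{equation*}
for each $i\in\mathcal{I}_n$ and each $x\in\mathbb{R}^+$. Summing the termwise inequality against the common weights $r_i$ then yields $F_{U_n(\boldsymbol{r};\boldsymbol{\alpha},\boldsymbol{\sigma},\boldsymbol{\lambda})}(x)\geq F_{U_n(\boldsymbol{r};\boldsymbol{\beta},\boldsymbol{\mu},\boldsymbol{\theta})}(x)$, giving the desired $\leq_{st}$ ordering.

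I would fix $i$ and first observe that, because $c\in\mathbb{R}^+$, $\sigma_i\leq\mu_i$ and $\lambda_i\leq\theta_i$, the two cut-off points satisfy $\sigma_i+c\lambda_i\leq\mu_i+c\theta_i$. I then split into three cases along the real line. \emph{Case 1}: $x\leq\sigma_i+c\lambda_i$; both indicators vanish, so the inequality reads $0\geq 0$. \emph{Case 2}: $\sigma_i+c\lambda_i<x\leq\mu_i+c\theta_i$; the right-hand indicator is $0$ while the left-hand side is nonnegative, so the inequality again holds trivially. \emph{Case 3}: $x>\mu_i+c\theta_i$; both indicators equal $1$ and a genuine comparison is required.

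For Case 3 the work is essentially two chained monotonicity arguments. Since $x>\mu_i+c\theta_i\geq\mu_i$ and $\sigma_i\leq\mu_i$, we have $x-\sigma_i\geq x-\mu_i\geq 0$; combined with $0<\lambda_i\leq\theta_i$ this yields
\begin{equation*}
\frac{x-\sigma_i}{\lambda_i}\;\geq\;\frac{x-\sigma_i}{\theta_i}\;\geq\;\frac{x-\mu_i}{\theta_i}\;\geq\;0.
\end{equation*}
Monotonicity of the baseline CDF $F$ then gives $F\!\left(\frac{x-\sigma_i}{\lambda_i}\right)\geq F\!\left(\frac{x-\mu_i}{\theta_i}\right)$, and because both arguments of $F$ lie in the domain where $F\in[0,1]$, raising the second one to a larger exponent $\beta_i\geq\alpha_i$ can only decrease it, so
\begin{equation*}
F^{\alpha_i}\!\left(\frac{x-\sigma_i}{\lambda_i}\right)\;\geq\;F^{\alpha_i}\!\left(\frac{x-\mu_i}{\theta_i}\right)\;\geq\;F^{\beta_i}\!\left(\frac{x-\mu_i}{\theta_i}\right),
\end{equation*}
which is the desired Case 3 inequality.

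The proof is essentially mechanical once the case analysis is organised, and there is no real obstacle; the only subtle point is verifying $\frac{x-\sigma_i}{\lambda_i}\geq\frac{x-\mu_i}{\theta_i}$ without requiring extra sign hypotheses, which is handled by inserting the intermediate quantity $\frac{x-\sigma_i}{\theta_i}$ and using $x-\sigma_i\geq 0$ (itself guaranteed by the Case 3 assumption $x>\mu_i+c\theta_i\geq\mu_i\geq\sigma_i$). Note that no majorization machinery from Section~\ref{section2} is needed here because the parameter vectors are compared coordinatewise rather than via a symmetric functional.
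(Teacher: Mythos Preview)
Your argument is correct and rests on the same core inequality as the paper --- namely the termwise bound $F^{\alpha_i}\!\left(\frac{x-\sigma_i}{\lambda_i}\right)\geq F^{\beta_i}\!\left(\frac{x-\mu_i}{\theta_i}\right)$ whenever both indicators are active --- but your organisation is cleaner. The paper invokes the hypothesis $\boldsymbol{\sigma},\boldsymbol{\lambda},\boldsymbol{\mu},\boldsymbol{\theta}\in\mathcal{E}_n^+$ to order the cut-off points globally, writes each survival function as an explicit piecewise expression, and then runs a six-case analysis according to which pair of global subintervals contains $x$. By contrast, you keep the indicators in the sum and do a three-case split \emph{per index}, which avoids the global bookkeeping entirely. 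A pleasant by-product is that your proof never uses the $\mathcal{E}_n^+$ assumption at all; it shows the result holds for arbitrary $\boldsymbol{\sigma},\boldsymbol{\lambda},\boldsymbol{\mu},\boldsymbol{\theta}$ with nonnegative entries, so that hypothesis (and the paper's subsequent remark about $\mathcal{D}_n^+$) is in fact superfluous.
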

	
\begin{proof}
To obtain the desired result, it is required to show that
$\bar{F}_{U_n(\boldsymbol{r};\boldsymbol{\alpha},\boldsymbol{\sigma},\boldsymbol{\lambda})}(x)\leq\bar{F}_{U_n(\boldsymbol{r};\boldsymbol{\beta},\boldsymbol{\mu},\boldsymbol{\theta})}(x).$
Under the assumptions $\boldsymbol{\sigma},\boldsymbol{\lambda},\boldsymbol{\mu},\boldsymbol{\theta}\in\mathcal{E}_{n}^{+}$, the SF of $U_n(\boldsymbol{r};\boldsymbol{\alpha},\boldsymbol{\sigma},\boldsymbol{\lambda})$ is written as  
\[\bar{F}_{U_n(\boldsymbol{r};\boldsymbol{\alpha},\boldsymbol{\sigma},\boldsymbol{\lambda})}(x)=
\begin{cases}
1 &;x\leq\sigma_1+c\lambda_1\\
1-r_1F^{\alpha_1}\left(\frac{x-\sigma_1}{\lambda_1}\right)  &;\sigma_1+c\lambda_1<x\leq\sigma_2+c\lambda_2\\~~~~~~~~~~~~\vdots\\
1-\sum_{i=1}^{k}r_iF^{\alpha_i}\left(\frac{x-\sigma_i}{\lambda_i}\right)&;\sigma_k+c\lambda_k<x\leq\sigma_{k+1}+c\lambda_{k+1}\\~~~~~~~~~~~~\vdots\\
1-\sum_{i=1}^{n}r_iF^{\alpha_i}\left(\frac{x-\sigma_i}{\lambda_i}\right)&;x>\sigma_{n}+c\lambda_{n},
\end{cases}\]
where $k\in\mathcal{I}_{n-1}$. The SF of $U_n(\boldsymbol{r};\boldsymbol{\beta},\boldsymbol{\mu},\boldsymbol{\theta})$ can be written similarly. From the assumption, we have $\lambda_i\leq\theta_i$ and $\sigma_i\leq\mu_i$, implying that $\sigma_i+c\lambda_i\leq\mu_i+c\theta_i$, $\forall~i\in\mathcal{I}_n$. Further, using $\lambda_i\leq\theta_i$, $\sigma_i\leq\mu_i$, and $\alpha_i\leq\beta_i$, we obtain
$F^{\alpha_i}(\frac{x-\sigma_i}{\lambda_i})\geq F^{\beta_i}(\frac{x-\mu_i}{\theta_i})$, from which it is clear that 
\begin{eqnarray}\label{eq3.7}
\sum_{i=1}^{k}r_iF^{\alpha_i}\left(\frac{x-\sigma_i}{\lambda_i}\right)\geq \sum_{i=1}^{l}r_iF^{\beta_i}\left(\frac{x-\mu_i}{\theta_i}\right),
\end{eqnarray}
for $k\geq l;$ $k,l\in\mathcal{I}_{n}$. Under the given settings, the value of $x$ may fall in different subintervals, which have been considered in the following cases to prove the desired inequality between the SFs of $U_n(\boldsymbol{r};\boldsymbol{\alpha},\boldsymbol{\sigma},\boldsymbol{\lambda})$ and $U_n(\boldsymbol{r};\boldsymbol{\beta},\boldsymbol{\mu},\boldsymbol{\theta})$:\\
$\boldsymbol{{Case~I}}:$ Let $x\leq\sigma_1+c\lambda_1$. Then, it is obvious that 
$\bar{F}_{U_n(\boldsymbol{r};\boldsymbol{\alpha},\boldsymbol{\sigma},\boldsymbol{\lambda})}(x)=1=\bar{F}_{U_n(\boldsymbol{r};\boldsymbol{\beta},\boldsymbol{\mu},\boldsymbol{\theta})}(x).$\\
$\boldsymbol{{Case~II}}:$ Consider $\sigma_k+c\lambda_k<x\leq\ \sigma_{k+1}+c\lambda_{k+1}$, for $k\in\mathcal{I}_{n-1}$. Using (\ref{eq3.7}), it can be established that 
\begin{eqnarray*}
\bar{F}_{U_n(\boldsymbol{r};\boldsymbol{\alpha},\boldsymbol{\sigma},\boldsymbol{\lambda})}(x)=1-\sum_{i=1}^{k}r_iF^{\alpha_i}\left(\frac{x-\sigma_i}{\lambda_i}\right)\leq 1-\sum_{i=1}^{l}r_iF^{\beta_i}\left(\frac{x-\mu_i}{\theta_i}\right)=\bar{F}_{U_n(\boldsymbol{r};\boldsymbol{\beta},\boldsymbol{\mu},\boldsymbol{\theta})}(x),
\end{eqnarray*}
for $k\geq l;~k,l\in\mathcal{I}_{n-1}.$\\
$\boldsymbol{{Case~III}}:$ Suppose $\mu_l+c\theta_l<x\leq\mu_{l+1}+c\theta_{l+1}$. Then, by using (\ref{eq3.7}), we obtain
\begin{eqnarray*}
\bar{F}_{U_n(\boldsymbol{r};\boldsymbol{\alpha},\boldsymbol{\sigma},\boldsymbol{\lambda})}(x)=1-\sum_{i=1}^{k}r_iF^{\alpha_i}\left(\frac{x-\sigma_i}{\lambda_i}\right)\leq 1-\sum_{i=1}^{l}r_iF^{\beta_i}\left(\frac{x-\mu_i}{\theta_i}\right)=\bar{F}_{U_n(\boldsymbol{r};\boldsymbol{\beta},\boldsymbol{\mu},\boldsymbol{\theta})}(x),
\end{eqnarray*}
for $k\geq l$; $k,l\in\mathcal{I}_{n-1}.$\\
$\boldsymbol{{Case~IV}}:$ Assume that $\sigma_k+c\lambda_k<x\leq\mu_{l+1}+c\theta_{l+1}$. Now, for $k\geq l,$ where $k,l\in\mathcal{I}_{n-1},$ utilizing (\ref{eq3.7}), we obtain

\begin{eqnarray*}
\bar{F}_{U_n(\boldsymbol{r};\boldsymbol{\alpha},\boldsymbol{\sigma},\boldsymbol{\lambda})}(x)=1-\sum_{i=1}^{k}r_iF^{\alpha_i}\left(\frac{x-\sigma_i}{\lambda_i}\right)\leq 1-\sum_{i=1}^{l}r_iF^{\beta_i}\left(\frac{x-\mu_i}{\theta_i}\right)=\bar{F}_{U_n(\boldsymbol{r};\boldsymbol{\beta},\boldsymbol{\mu},\boldsymbol{\theta})}(x).
\end{eqnarray*}
$\boldsymbol{{Case~V}}:$ Let $\mu_{l}+c\theta_{l}<x\leq\sigma_{k+1}+c\lambda_{k+1}$. Under this case, using (\ref{eq3.7}), we get
\begin{eqnarray*}
\bar{F}_{U_n(\boldsymbol{r};\boldsymbol{\alpha},\boldsymbol{\sigma},\boldsymbol{\lambda})}(x)=1-\sum_{i=1}^{k}r_iF^{\alpha_i}\left(\frac{x-\sigma_i}{\lambda_i}\right)\leq 1-\sum_{i=1}^{l}r_iF^{\beta_i}\left(\frac{x-\mu_i}{\theta_i}\right)=\bar{F}_{U_n(\boldsymbol{r};\boldsymbol{\beta},\boldsymbol{\mu},\boldsymbol{\theta})}(x),
\end{eqnarray*}
for $k\geq l;~k,l\in\mathcal{I}_{n-1}.$\\
$\boldsymbol{{Case~VI}}:$ Finally, when $x>\mu_n+c\theta_n$. In this case, under the assumptions made, we clearly have   
\begin{eqnarray*}
\bar{F}_{U_n(\boldsymbol{r};\boldsymbol{\alpha},\boldsymbol{\sigma},\boldsymbol{\lambda})}(x)=1-\sum_{i=1}^{n}r_iF^{\alpha_i}\left(\frac{x-\sigma_i}{\lambda_i}\right)\leq 1-\sum_{i=1}^{n}r_iF^{\beta_i}\left(\frac{x-\mu_i}{\theta_i}\right)=\bar{F}_{U_n(\boldsymbol{r};\boldsymbol{\beta},\boldsymbol{\mu},\boldsymbol{\theta})}(x).
\end{eqnarray*}
Combining the results obtained in the above six cases, the usual stochastic order between $U_n(\boldsymbol{r};\boldsymbol{\alpha},\boldsymbol{\sigma},\boldsymbol{\lambda})$ and $U_n(\boldsymbol{r};\boldsymbol{\beta},\boldsymbol{\mu},\boldsymbol{\theta})$ readily follows. 
\end{proof}

In order to validate the established result in Theorem \ref{theorem3.1}, we present Example \ref{example4.1} in Section \ref{section4}. Further, we provide a counterexample (see Counterexample \ref{Coun4.1}) to show that if the inequalities between all the model parameters are not satisfied, then the usual stochastic order between two FMs may not hold. 

\begin{remark}
The established result in Theorem \ref{theorem3.1} also holds if $\boldsymbol{\sigma},\boldsymbol{\lambda},\boldsymbol{\mu},\boldsymbol{\theta}\in\mathcal{D}_{n}^{+}$.
\end{remark} 
	
\begin{remark}
In Theorem \ref{theorem3.1}, we have considered a common mixing proportion parameter vector between the FMMs. In this regard, it is natural to raise a question that ``does the usual stochastic ordering in Theorem \ref{theorem3.1} hold if we do not assume a common mixing proportion parameter vector between the FMMs?'' In this purpose, we have considered several numerical examples (not reported here for the sake of brevity) by taking $r_i\ge s_i$, for $i\in\mathcal{I}_{n-1}$, which provide an evidence of the existence of the similar usual stochastic ordering result in Theorem \ref{theorem3.1}. However, we are not able to establish it theoretically. Thus, this problem when the mixing proportion parameter vectors are not common can be considered as an open problem. 
\end{remark}
	
In the previous theorem, we have proved the usual stochastic order between two MRVs with a common mixing proportion parameter vector. Thus, it is of interest to study if the usual stochastic order can be extended to other stronger stochastic orders, say reversed hazard rate order or likelihood ratio order. It is observed in Counterexample \ref{Coun4.2} and Counterexample \ref{Coun4.22} that under the same settings as in Theorem \ref{theorem3.1}, this extension is not possible. In the following consecutive theorems, we have proposed sufficient conditions (other than that of Theorem \ref{theorem3.1}), under which the extension of stochastic ordering result from usual stochastic order to reversed hazard rate order and likelihood ratio order is possible. In the next result, we assume different model parameter vectors as well as different mixing proportion vectors for two FMMs. Denote $m_1=\displaystyle\min_{i\in\mathcal{I}_{n}}\{\sigma_i+c\lambda_i\}$ and $m_2=\displaystyle\min_{i\in\mathcal{I}_{n}}\{\mu_i+c\theta_i\}$.

\begin{theorem}\label{theorem3.2}
Let $F_{U_n(\boldsymbol{r};\boldsymbol{\alpha},\boldsymbol{\sigma},\boldsymbol{\lambda})}(x)=\sum_{i=1}^{n}r_iF^{\alpha_i}(\frac{x-\sigma_i}{\lambda_i})I(x>\sigma_i+c\lambda_i)$ and $F_{U_n(\boldsymbol{s};\boldsymbol{\beta},\boldsymbol{\mu},\boldsymbol{\theta})}(x)=\sum_{i=1}^{n}s_i\\F^{\beta_i}(\frac{x-\mu_i}{\theta_i})I(x>\mu_i+c\theta_i)$ be the CDFs of two MRVs $U_n(\boldsymbol{r};\boldsymbol{\alpha},\boldsymbol{\sigma},\boldsymbol{\lambda})$ and $U_n(\boldsymbol{s};\boldsymbol{\beta},\boldsymbol{\mu},\boldsymbol{\theta})$, respectively. Further, assume that $\max\left\lbrace\alpha_1,\ldots,\alpha_n\right\rbrace\leq \min\left\lbrace\beta_1,\ldots,\beta_n\right\rbrace$, $\max\left\lbrace\sigma_1,\ldots,\sigma_n\right\rbrace\leq \min\left\lbrace\mu_1,\ldots,\mu_n\right\rbrace$, and $\max\left\lbrace\lambda_1,\ldots,\lambda_n\right\rbrace\leq \min\left\lbrace\theta_1,\ldots,\theta_n\right\rbrace$. Then, for $x>m_1$, we have
\begin{eqnarray*}
U_n(\boldsymbol{r};\boldsymbol{\alpha},\boldsymbol{\sigma},\boldsymbol{\lambda})\leq_{rh}U_n(\boldsymbol{s};\boldsymbol{\beta},\boldsymbol{\mu},\boldsymbol{\theta}),
\end{eqnarray*}
provided $t\tilde{h}(t)=t\frac{f(t)}{F(t)}$ is decreasing in $t>c\geq0$.
\end{theorem}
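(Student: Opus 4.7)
The plan is to translate $X\leq_{rh}Y$ (writing $X=U_n(\boldsymbol{r};\boldsymbol{\alpha},\boldsymbol{\sigma},\boldsymbol{\lambda})$ and $Y=U_n(\boldsymbol{s};\boldsymbol{\beta},\boldsymbol{\mu},\boldsymbol{\theta})$) into the pointwise inequality $\tilde{h}_X(x)\leq\tilde{h}_Y(x)$ on the relevant part of the support, and then drive that inequality componentwise using the $\max$-$\min$ structure of the hypothesis. Since $F_Y$ vanishes on $(m_1,m_2]$, the ratio $F_Y(x)/F_X(x)$ is identically zero there and is trivially non-decreasing, so the substantive work is to show $\tilde{h}_X(x)\leq\tilde{h}_Y(x)$ on $(m_2,\infty)$. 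A useful preliminary observation is that $\max_i\sigma_i\leq\min_j\mu_j$ and $\max_i\lambda_i\leq\min_j\theta_j$ together force $\max_i(\sigma_i+c\lambda_i)\leq m_2$, so that for every $x>m_2$ \emph{all} $n$ components of the first mixture are active, while at least one component of the second one is.

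Next I would write each mixture reversed hazard rate as a convex combination of component RHRs with $x$-dependent weights,
\[
\tilde{h}_X(x)=\sum_{i=1}^{n}w_i^A(x)\,\frac{\alpha_i}{\lambda_i}\tilde{h}\!\left(\frac{x-\sigma_i}{\lambda_i}\right),\qquad \tilde{h}_Y(x)=\sum_{j\in J(x)}w_j^B(x)\,\frac{\beta_j}{\theta_j}\tilde{h}\!\left(\frac{x-\mu_j}{\theta_j}\right),
\]
where $J(x)=\{j:x>\mu_j+c\theta_j\}$ and the $w_i^A,w_j^B$ are the obvious nonnegative weights summing to one. Because any convex combination lies between the minimum and maximum of its inputs, it suffices to prove the pairwise bound $\tfrac{\alpha_i}{\lambda_i}\tilde{h}(\tfrac{x-\sigma_i}{\lambda_i})\leq\tfrac{\beta_j}{\theta_j}\tilde{h}(\tfrac{x-\mu_j}{\theta_j})$ for every $i\in\mathcal{I}_n$ and every $j\in J(x)$, after which the sandwich $\tilde{h}_X(x)\leq\max_i\tilde{h}_i^A(x)\leq\min_{j\in J(x)}\tilde{h}_j^B(x)\leq\tilde{h}_Y(x)$ closes the argument.

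The pairwise step is where the hypothesis on $t\tilde{h}(t)$ is consumed. Setting $u=(x-\sigma_i)/\lambda_i$ and $v=(x-\mu_j)/\theta_j$, the inequalities $\sigma_i\leq\mu_j$ and $0<\lambda_i\leq\theta_j$ give $u\geq v>c$, so the monotonicity of $t\mapsto t\tilde{h}(t)$ on $(c,\infty)$ yields $u\tilde{h}(u)\leq v\tilde{h}(v)$. Multiplying by $\alpha_i$ and using $\alpha_i\leq\beta_j$ produces $(x-\sigma_i)\tfrac{\alpha_i}{\lambda_i}\tilde{h}(u)\leq(x-\mu_j)\tfrac{\beta_j}{\theta_j}\tilde{h}(v)$; a short dividing step using $x-\sigma_i\geq x-\mu_j>0$ then delivers the desired bound.

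The part I expect to require the most care is the bookkeeping around the indicator functions and the two thresholds $m_1<m_2$: the order $\leq_{rh}$ must be read as monotonicity of the quotient $F_Y/F_X$ on $(m_1,\infty)$ rather than as a pointwise RHR inequality on $(m_1,m_2]$ (where $F_Y\equiv 0$), and one should verify right-continuity of this quotient at each $\mu_j+c\theta_j$ where $J(x)$ enlarges. Both checks are robust because the componentwise pairwise bound has been proved uniformly in $(i,j)$, so the sandwich survives every such transition and the overall monotonicity of $F_Y/F_X$ on $(m_1,\infty)$ follows.
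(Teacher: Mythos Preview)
Your proof is correct and is essentially the paper's argument in a slightly different wrapper: the paper differentiates $\psi(x)=F_Y(x)/F_X(x)$ directly, expands $\psi'(x)$ (up to sign) as the double sum $\sum_{i,j} r_i s_j F^{\alpha_i}(\cdot)F^{\beta_j}(\cdot)\,I(\cdot)I(\cdot)\bigl[\tfrac{\beta_j}{\theta_j}\tilde h(\cdot)-\tfrac{\alpha_i}{\lambda_i}\tilde h(\cdot)\bigr]$, and shows each bracket is nonnegative via exactly your pairwise inequality (their display (3.7)/(3.13) is your ``$u\tilde h(u)\le v\tilde h(v)$, multiply by $\alpha_i\le\beta_j$, divide by $x-\sigma_i\ge x-\mu_j>0$''). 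Your convex-combination sandwich $\tilde h_X\le\max_i\le\min_j\le\tilde h_Y$ is an equivalent repackaging of that double sum, and your treatment of the interval $(m_1,m_2]$ and of the thresholds where $J(x)$ enlarges is a bit more explicit than the paper, which just carries the indicators through and lets the zero terms handle themselves.
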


\begin{proof}
To obtain the desired result, it is necessary to prove that $\psi(x)=F_{U_n(\boldsymbol{s};\boldsymbol{\beta},\boldsymbol{\mu},\boldsymbol{\theta})}(x)/F_{U_n(\boldsymbol{r};\boldsymbol{\alpha},\boldsymbol{\sigma},\boldsymbol{\lambda})}(x)$ is increasing in $x>m_1$. Differentiating $\psi(x)$ with respect to $x$, we obtain
\allowdisplaybreaks{\begin{align}\label{eq3.12}
\psi^{\prime}(x) &\stackrel{sign}{=}F_{U_n(\boldsymbol{r};\boldsymbol{\alpha},\boldsymbol{\sigma},\boldsymbol{\lambda})}(x)f_{U_n(\boldsymbol{s};\boldsymbol{\beta},\boldsymbol{\mu},\boldsymbol{\theta})}(x)-f_{U_n(\boldsymbol{r};\boldsymbol{\alpha},\boldsymbol{\sigma},\boldsymbol{\lambda})}(x)F_{U_n(\boldsymbol{s};\boldsymbol{\beta},\boldsymbol{\mu},\boldsymbol{\theta})}(x)\nonumber\\
&~=\sum_{i=1}^{n}r_iF^{\alpha_i}\left(\frac{x-\sigma_i}{\lambda_i}\right)I(x>\sigma_i+c\lambda_i)\sum_{i=1}^{n}\frac{s_i\beta_i}{\theta_i}F^{\beta_i-1}\left(\frac{x-\mu_i}{\theta_i}\right)f\left(\frac{x-\mu_i}{\theta_i}\right)\nonumber\\
&~~~~\times I(x>\mu_i+c\theta_i)-\sum_{i=1}^{n}\frac{r_i\alpha_i}{\lambda_i}F^{\alpha_i-1}\left(\frac{x-\sigma_i}{\lambda_i}\right)f\left(\frac{x-\sigma_i}{\lambda_i}\right)I(x>\sigma_i+c\lambda_i)\nonumber\\
&~~~~\times\sum_{i=1}^{n}s_iF^{\beta_i}\left(\frac{x-\mu_i}{\theta_i}\right)I(x>\mu_i+c\theta_i)\nonumber\\
&=\sum_{i=1}^{n}\sum_{j=1}^{n}r_is_jF^{\alpha_i}\left(\frac{x-\sigma_i}{\lambda_i}\right)F^{\beta_j}\left(\frac{x-\mu_j}{\theta_j}\right)I(x>\sigma_i+c\lambda_i)I(x>\mu_j+c\theta_j)\nonumber\\
&~~~~\times\left[\frac{\beta_j}{\theta_j}\tilde{h}\left(\frac{x-\mu_j}{\theta_j}\right)-\frac{\alpha_i}{\lambda_i}\tilde{h}\left(\frac{x-\sigma_i}{\lambda_i}\right)\right].
\end{align}}
In addition, we assume that $\max\left\lbrace\alpha_1,\ldots,\alpha_n\right\rbrace\leq \min\left\lbrace\beta_1,\ldots,\beta_n\right\rbrace$, $\max\left\lbrace\sigma_1,\ldots,\sigma_n\right\rbrace\leq \min\left\lbrace\mu_1,\ldots,\mu_n\right\rbrace$, $\max\left\lbrace\lambda_1,\ldots,\lambda_n\right\rbrace\leq \min\left\lbrace\theta_1,\ldots,\theta_n\right\rbrace$, and $t\tilde{h}(t)$ is decreasing in $t>c$. Thus,
\begin{eqnarray}\label{eq3.13}
\frac{\alpha_i}{x-\sigma_i}\frac{x-\sigma_i}{\lambda_i}\tilde{h}\left(\frac{x-\sigma_i}{\lambda_i}\right)\leq\frac{\beta_j}{x-\mu_j}\frac{x-\mu_j}{\theta_j}\tilde{h}\left(\frac{x-\mu_j}{\theta_j}\right),
\end{eqnarray}
for $i,j\in\mathcal{I}_n$. Now, substituting (\ref{eq3.13}) in (\ref{eq3.12}), it is easy to see that $\psi^{\prime}(x)\geq0$, which means that $\psi(x)$ is increasing in $x>m_1$. Hence, the theorem is proved. 
\end{proof}
	
In the next result, the likelihood ratio order between two MRVs $U_n(\boldsymbol{r};\boldsymbol{\alpha},\sigma,\lambda)$ and $U_n(\boldsymbol{s};\boldsymbol{\beta},\sigma,\lambda)$ has been established. Here, we have considered heterogeneity in the mixing proportion parameter vectors and shape parameter vectors. The location parameter $\sigma$ and scale parameter $\lambda$ are assumed to be fixed and common for both MMs.    
	
\begin{theorem}
\label{theorem3.3}
Let $F_{U_n(\boldsymbol{r};\boldsymbol{\alpha},\sigma,\lambda)}(x)=\sum_{i=1}^{n}r_iF^{\alpha_i}(\frac{x-\sigma}{\lambda})$ and $F_{U_n(\boldsymbol{s};\boldsymbol{\beta},\sigma,\lambda)}(x)=\sum_{i=1}^{n}s_iF^{\beta_i}(\frac{x-\sigma}{\lambda})$ be the CDFs of two MRVs $U_n(\boldsymbol{r};\boldsymbol{\alpha},\sigma,\lambda)$ and $U_n(\boldsymbol{s};\boldsymbol{\beta},\sigma,\lambda)$, respectively, for $x>\sigma+c\lambda$. Furthermore, assume that $\max\left\lbrace\alpha_1,\ldots,\alpha_n\right\rbrace\leq \min\left\lbrace\beta_1,\ldots,\beta_n\right\rbrace$. Then, for $x>\sigma+c\lambda$, we have
\begin{eqnarray*}
U_n(\boldsymbol{r};\boldsymbol{\alpha},\sigma,\lambda)\leq_{lr}U_n(\boldsymbol{s};\boldsymbol{\beta},\sigma,\lambda).
\end{eqnarray*}
\end{theorem}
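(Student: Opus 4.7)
The plan is to show directly that the density ratio is monotone on the support $(\sigma + c\lambda, \infty)$. Since the location and scale parameters are common and fixed, the common factor $\lambda^{-1} f((x-\sigma)/\lambda)$ cancels from the numerator and denominator of $f_{U_n(\boldsymbol{s};\boldsymbol{\beta},\sigma,\lambda)}(x)/f_{U_n(\boldsymbol{r};\boldsymbol{\alpha},\sigma,\lambda)}(x)$, leaving
\[
\frac{\sum_{i=1}^{n} s_i \beta_i F^{\beta_i - 1}\!\left(\frac{x-\sigma}{\lambda}\right)}{\sum_{i=1}^{n} r_i \alpha_i F^{\alpha_i - 1}\!\left(\frac{x-\sigma}{\lambda}\right)}.
\]
I would then make the change of variable $v = F((x-\sigma)/\lambda)$, which maps $(\sigma + c\lambda, \infty)$ bijectively and monotonically onto $(0,1)$, since $f > 0$ on $(c,\infty)$ forces $F$ to be strictly increasing there. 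Establishing the likelihood ratio order is thus equivalent to showing that $R(v) = g(v)/h(v)$ is increasing on $(0,1)$, where $g(v) = \sum_{i=1}^{n} s_i \beta_i v^{\beta_i - 1}$ and $h(v) = \sum_{i=1}^{n} r_i \alpha_i v^{\alpha_i - 1}$.

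The second step is to differentiate and reorganize $g'h - gh'$ as a symmetric double sum. A short direct computation yields
\[
R'(v) \stackrel{sign}{=} g'(v)\, h(v) - g(v)\, h'(v) = \sum_{i=1}^{n} \sum_{j=1}^{n} s_i r_j \alpha_j \beta_i (\beta_i - \alpha_j)\, v^{\alpha_j + \beta_i - 3}.
\]
Under the hypothesis $\max\{\alpha_1,\ldots,\alpha_n\} \leq \min\{\beta_1,\ldots,\beta_n\}$, every difference $\beta_i - \alpha_j$ is nonnegative, while all remaining factors are strictly positive for $v \in (0,1)$. Hence $R'(v) \geq 0$, so $R$ is increasing in $v$, which transfers back to monotonicity in $x$ and delivers $U_n(\boldsymbol{r};\boldsymbol{\alpha},\sigma,\lambda) \leq_{lr} U_n(\boldsymbol{s};\boldsymbol{\beta},\sigma,\lambda)$.

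The main obstacle, modest as it is, lies in the bookkeeping that collapses $g'h - gh'$ into a double sum carrying the single clean factor $(\beta_i - \alpha_j)$; once that reduction is in place, positivity is immediate. It is also worth noting where the ``max $\leq$ min'' condition is really being spent: every cross pairing $(\alpha_j, \beta_i)$ of indices from the two shape vectors contributes a term, so a merely componentwise bound $\alpha_i \leq \beta_i$ would not control the off-diagonal summands, and a uniform separation is essential. No further constraint on the mixing weights $\boldsymbol{r}$ and $\boldsymbol{s}$ is needed beyond the positivity already built into the definition of a finite mixture.
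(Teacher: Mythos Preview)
Your proof is correct and follows essentially the same approach as the paper: both reduce the sign of the derivative of the density ratio to the double sum $\sum_{i,j} r_j s_i \alpha_j \beta_i (\beta_i - \alpha_j)\, v^{\alpha_j + \beta_i - 3}$, whose nonnegativity follows immediately from the max--min separation hypothesis. Your substitution $v = F((x-\sigma)/\lambda)$ is a minor streamlining over the paper's direct differentiation in $x$ (which produces $f'/f$ terms that subsequently cancel), but the argument is otherwise identical.
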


\begin{proof}
To obtain the necessary result, it suffices to prove that $\xi(x)=\frac{f_{U_n(\boldsymbol{s};\boldsymbol{\beta},\sigma,\lambda)}(x)}{f_{U_n(\boldsymbol{r};\boldsymbol{\alpha},\sigma,\lambda)}(x)}$ is increasing in $x>\sigma+c\lambda$. In doing so, we have
\allowdisplaybreaks{\begin{align}\label{eq3.14}
\xi^\prime(x) &\stackrel{sign}{=}f_{U_n(\boldsymbol{r};\boldsymbol{\alpha},\sigma,\lambda)}(x)f^\prime_{U_n(\boldsymbol{s};\boldsymbol{\beta},\sigma,\lambda)}(x)-f^\prime_{U_n(\boldsymbol{r};\boldsymbol{\alpha},\sigma,\lambda)}(x)f_{U_n(\boldsymbol{s};\boldsymbol{\beta},\sigma,\lambda)}(x)\nonumber\\
&~=\sum_{i=1}^{n}\frac{r_i\alpha_i}{\lambda^2}F^{\alpha_i-1}\left(\frac{x-\sigma}{\lambda}\right)f\left(\frac{x-\sigma}{\lambda}\right)\sum_{i=1}^{n}\frac{s_i\beta_i}{\lambda}\Big[(\beta_i-1)F^{\beta_i-2}\left(\frac{x-\sigma}{\lambda}\right)f^2\left(\frac{x-\sigma}{\lambda}\right)\nonumber\\
&~~~~+F^{\beta_i-1}\left(\frac{x-\sigma}{\lambda}\right)f^\prime\left(\frac{x-\sigma}{\lambda}\right)\Big]-\sum_{i=1}^{n}\frac{r_i\alpha_i}{\lambda^2}\Big[(\alpha_i-1)F^{\alpha_i-2}\left(\frac{x-\sigma}{\lambda}\right)f^2\left(\frac{x-\sigma}{\lambda}\right)\nonumber\\
&~~~~+F^{\alpha_i-1}\left(\frac{x-\sigma}{\lambda}\right)f^\prime\left(\frac{x-\sigma}{\lambda}\right) \Big]\sum_{i=1}^{n}\frac{s_i\beta_i}{\lambda}F^{\beta_i-1}\left(\frac{x-\sigma}{\lambda}\right)f\left(\frac{x-\sigma}{\lambda}\right)\nonumber\\
&=\sum_{i=1}^{n}\sum_{j=1}^{n}\frac{r_i s_j\alpha_i\beta_j}{\lambda^3}F^{\alpha_i-1}\left(\frac{x-\sigma}{\lambda}\right)F^{\beta_j-1}\left(\frac{x-\sigma}{\lambda}\right)f^2\left(\frac{x-\sigma}{\lambda}\right)\Bigg[(\beta_j-1)\frac{f\left(\frac{x-\sigma}{\lambda}\right)}{F\left(\frac{x-\sigma}{\lambda}\right)}\nonumber\\
&~~~~+\frac{f^\prime\left(\frac{x-\sigma}{\lambda}\right)}{f\left(\frac{x-\sigma}{\lambda}\right)}-(\alpha_i-1)\frac{f\left(\frac{x-\sigma}{\lambda}\right)}{F\left(\frac{x-\sigma}{\lambda}\right)}-\frac{f^\prime\left(\frac{x-\sigma}{\lambda}\right)}{f\left(\frac{x-\sigma}{\lambda}\right)}\Bigg]\nonumber\\
&=\sum_{i=1}^{n}\sum_{j=1}^{n}\frac{r_i s_j\alpha_i\beta_j}{\lambda^3}F^{\alpha_i+\beta_j-3}\left(\frac{x-\sigma}{\lambda}\right)f^3\left(\frac{x-\sigma}{\lambda}\right)(\beta_j-\alpha_i).  
\end{align}}
From the assumption, $\max\left\lbrace\alpha_1,\ldots,\alpha_n\right\rbrace\leq \min\left\lbrace\beta_1,\ldots,\beta_n\right\rbrace$, equivalently $\beta_j\geq\alpha_i$,
for $i,j\in\mathcal{I}_n$. Thus, by using $\beta_j\geq\alpha_i$, from (\ref{eq3.14}), we obtain that $\xi^\prime(x)$ is nonnegative. This proves the theorem. 
\end{proof}

In Theorem \ref{theorem3.1}, we derived the usual stochastic ordering between two MRVs under the assumption that the associated MMs share a common vector of mixing proportion parameters. Here, we relax this assumption by allowing the mixing proportion vectors to differ. The sufficient conditions in this case are expressed in terms of the majorization relationship between the vectors of mixing proportions and shape parameters.	

	
\begin{theorem}\label{theorem3.4}
Let $F_{U_n(\boldsymbol{r};\boldsymbol{\alpha},\sigma,\lambda)}(x)=\sum_{i=1}^{n}r_iF^{\alpha_i}(\frac{x-\sigma}{\lambda})I(x>\sigma+c\lambda)$ and $F_{U_n(\boldsymbol{s};\boldsymbol{\beta},\mu,\theta)}(x)=\sum_{i=1}^{n}s_i\\F^{\beta_i}(\frac{x-\mu}{\theta})I(x>\mu+c\theta)$ be the CDFs of two MRVs $U_n(\boldsymbol{r};\boldsymbol{\alpha},\sigma,\lambda)$ and $U_n(\boldsymbol{s};\boldsymbol{\beta},\mu,\theta)$, respectively. Assume that $\boldsymbol{r},\boldsymbol{s}\in\mathcal{D}_n^+$ and $\boldsymbol{\alpha},\boldsymbol{\beta}\in\mathcal{E}_n^+$. Further, consider $\boldsymbol{r}\stackrel{m}\succcurlyeq\boldsymbol{s}$,  $\boldsymbol{\alpha}\stackrel{m}\succcurlyeq\boldsymbol{\beta}$, $\sigma\leq\mu$, and $\lambda\leq\theta$. Then, for $x>\sigma+c\lambda$, we have
\begin{eqnarray*}
U_n(\boldsymbol{r};\boldsymbol{\alpha},\sigma,\lambda)\leq_{st}U_n(\boldsymbol{s};\boldsymbol{\beta},\mu,\theta).
\end{eqnarray*}
\end{theorem}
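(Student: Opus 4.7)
The plan is to recast $U_n(\boldsymbol{r};\boldsymbol{\alpha},\sigma,\lambda)\leq_{st}U_n(\boldsymbol{s};\boldsymbol{\beta},\mu,\theta)$ as the pointwise inequality
\begin{equation*}
F_{U_n(\boldsymbol{r};\boldsymbol{\alpha},\sigma,\lambda)}(x)\;\geq\; F_{U_n(\boldsymbol{s};\boldsymbol{\beta},\mu,\theta)}(x),
\end{equation*}
and to move from $(\boldsymbol{r},\boldsymbol{\alpha},\sigma,\lambda)$ to $(\boldsymbol{s},\boldsymbol{\beta},\mu,\theta)$ in three single-parameter-group steps through the auxiliary triples $(\boldsymbol{s},\boldsymbol{\alpha},\sigma,\lambda)$ and $(\boldsymbol{s},\boldsymbol{\beta},\sigma,\lambda)$: (i) swap the mixing weights $\boldsymbol{r}\mapsto\boldsymbol{s}$, (ii) swap the shape vector $\boldsymbol{\alpha}\mapsto\boldsymbol{\beta}$, and (iii) swap the location/scale pair $(\sigma,\lambda)\mapsto(\mu,\theta)$. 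At each step I aim to show the CDF only decreases; chaining the three inequalities then delivers the claim.

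For step (i), I fix $t=(x-\sigma)/\lambda\in(c,\infty)$ and view $\phi_1(\boldsymbol{u})=\sum_{i=1}^{n}u_i F^{\alpha_i}(t)$ as a function on $\mathcal{D}_n^{+}$. The partial derivative $\partial\phi_1/\partial u_k=F^{\alpha_k}(t)$ is decreasing in $k$ because the $\alpha_k$ are nondecreasing ($\boldsymbol{\alpha}\in\mathcal{E}_n^{+}$) and $F(t)\leq 1$; Lemma \ref{lemma2.3} combined with Remark \ref{remark2.1} therefore renders $\phi_1$ Schur-convex on $\mathcal{D}_n^{+}$, and $\boldsymbol{r}\stackrel{m}\succcurlyeq\boldsymbol{s}$ yields $\sum r_i F^{\alpha_i}(t)\geq\sum s_i F^{\alpha_i}(t)$. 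For step (ii), with the same $t$, I set $\phi_2(\boldsymbol{v})=\sum_{i=1}^{n}s_i F^{v_i}(t)$ on $\mathcal{E}_n^{+}$. Its partial derivative $\partial\phi_2/\partial v_k=s_k F^{v_k}(t)\ln F(t)$ is increasing in $k$: $\ln F(t)\leq 0$ while $s_k$ (from $\boldsymbol{s}\in\mathcal{D}_n^{+}$) and $F^{v_k}(t)$ (from $\boldsymbol{\alpha}\in\mathcal{E}_n^{+}$ together with $F(t)\leq 1$) are both decreasing in $k$, so a nonnegative decreasing factor times a nonpositive constant is nondecreasing. Lemma \ref{lemma2.4} with Remark \ref{remark2.1} then gives Schur-convexity of $\phi_2$ on $\mathcal{E}_n^{+}$, and $\boldsymbol{\alpha}\stackrel{m}\succcurlyeq\boldsymbol{\beta}$ yields $\sum s_i F^{\alpha_i}(t)\geq\sum s_i F^{\beta_i}(t)$.

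For step (iii) and the case analysis in $x$, I first restrict to $x>\mu+c\theta$ and use the identity
\begin{equation*}
\theta(x-\sigma)-\lambda(x-\mu)=(\theta-\lambda)(x-\mu-c\theta)+\theta(\mu-\sigma)+c\theta(\theta-\lambda),
\end{equation*}
whose right-hand side is nonnegative under $\sigma\leq\mu$, $\lambda\leq\theta$, $c\geq 0$ and $x>\mu+c\theta$. This yields $(x-\sigma)/\lambda\geq(x-\mu)/\theta$, and monotonicity of $F$ together with $\beta_i>0$ gives $\sum s_i F^{\beta_i}((x-\sigma)/\lambda)\geq\sum s_i F^{\beta_i}((x-\mu)/\theta)$. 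Chaining (i)--(iii) produces the CDF inequality for $x>\mu+c\theta$; for $x\in(\sigma+c\lambda,\mu+c\theta]$ every indicator $I(x>\mu+c\theta)$ in the right-hand CDF vanishes so the inequality is automatic, and for $x\leq\sigma+c\lambda$ both sides are zero. The main obstacle is step (ii): the negativity of $\ln F(t)$ reverses the direction in which Lemma \ref{lemma2.4} asks one to verify monotonicity, so the hypotheses $\boldsymbol{s}\in\mathcal{D}_n^{+}$ and $\boldsymbol{\alpha}\in\mathcal{E}_n^{+}$ have to be used in tandem to make $s_k F^{v_k}(t)\ln F(t)$ monotone in precisely the direction the lemma demands.
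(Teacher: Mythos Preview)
Your proof is correct and follows essentially the same route as the paper: both arguments chain the inequality through the intermediate configurations $(\boldsymbol{s},\boldsymbol{\alpha},\sigma,\lambda)$ and $(\boldsymbol{s},\boldsymbol{\beta},\sigma,\lambda)$, invoking Lemma~\ref{lemma2.3} (Schur-convexity in $\boldsymbol{r}$ on $\mathcal{D}_n^{+}$) and Lemma~\ref{lemma2.4} (Schur-convexity in $\boldsymbol{\alpha}$ on $\mathcal{E}_n^{+}$) exactly as the paper does, and then using $\sigma\leq\mu$, $\lambda\leq\theta$ for the final step. Your treatment is slightly more explicit in step~(iii) --- supplying the algebraic identity for $\theta(x-\sigma)-\lambda(x-\mu)$ and spelling out the case split in $x$ --- but this is added rigor rather than a different idea; one small slip is that in step~(ii) you should write ``since $\boldsymbol{v}\in\mathcal{E}_n^{+}$'' rather than ``from $\boldsymbol{\alpha}\in\mathcal{E}_n^{+}$'', as the Schur-convexity check concerns a generic point $\boldsymbol{v}$ of $\mathcal{E}_n^{+}$.
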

	
\begin{proof}
To establish the desired result, it is necessary to show that	 $F_{U_n(\boldsymbol{r};\boldsymbol{\alpha},\sigma,\boldsymbol{\lambda})}(x)\geq F_{U_n(\boldsymbol{s};\boldsymbol{\beta},\mu,\theta)}(x)$, which can be attained by
\begin{eqnarray}\label{eq-3.25}
F_{U_n(\boldsymbol{r};\boldsymbol{\alpha},\sigma,\lambda)}(x)\geq F_{U_n(\boldsymbol{s};\boldsymbol{\alpha},\sigma,\lambda)}(x)\geq F_{U_n(\boldsymbol{s};\boldsymbol{\beta},\sigma,\lambda)}(x)\geq 
F_{U_n(\boldsymbol{s};\boldsymbol{\beta},\mu,\theta)}(x).
\end{eqnarray}
To establish the first two inequalities in (\ref{eq-3.25}), we have to show that  $F_{U_n(\boldsymbol{r};\boldsymbol{\alpha},\sigma,\lambda)}(x)$ and $F_{U_n(\boldsymbol{s};\boldsymbol{\alpha},\sigma,\lambda)}(x)$ are Schur-convex with respect to $\boldsymbol{r}$ and $\boldsymbol{\alpha}$, respectively. For $1\leq i<j\leq n$, we consider
\begin{align*}
\Delta_{1}=&\frac{\partial F_{U_n(\boldsymbol{r};\boldsymbol{\alpha},\sigma,\lambda)}(x)}{\partial r_i}-\frac{\partial F_{U_n(\boldsymbol{r};\boldsymbol{\alpha},\sigma,\lambda)}(x)}{\partial r_j}\\
=&F^{\alpha_i}\left(\frac{x-\sigma}{\lambda}\right)I(x>\sigma+c\lambda)-
F^{\alpha_j}\left(\frac{x-\sigma}{\lambda}\right)I(x>\sigma+c\lambda)\geq 0,
\end{align*}
whenever $\boldsymbol{\alpha}\in\mathcal{E}_n^+$. Thus, for $1\leq i<j\leq n$, we have $\Delta_{1}\geq 0$, implying that $\frac{\partial F_{U_n(\boldsymbol{r};\boldsymbol{\alpha},\sigma,\lambda)}(x)}{\partial r_k}$ is decreasing in $k\in\mathcal{I}_n$. From Lemma \ref{lemma2.3} with the help of Remark \ref{remark2.1}, it follows that $F_{U_n(\boldsymbol{r};\boldsymbol{\alpha},\sigma,\lambda)}(x)$ is Schur-convex with respect to $\boldsymbol{r}\in\mathcal{D}_n^+$. Thus, from Definition \ref{def2.4} 
\begin{eqnarray}\label{eq3.25}
\boldsymbol{r}\stackrel{m}{\succcurlyeq}\boldsymbol{s}\Rightarrow F_{U_n(\boldsymbol{r};\boldsymbol{\alpha},\sigma,\lambda)}(x)\geq F_{U_n(\boldsymbol{s};\boldsymbol{\alpha},\sigma,\lambda)}(x).
\end{eqnarray}
Furthermore, for $1\leq i<j\leq n$, we consider
\begin{eqnarray*}
\Delta_{2}&=&\frac{\partial F_{U_n(\boldsymbol{s};\boldsymbol{\alpha},\sigma,\lambda)}(x)}{\partial \alpha_i}-\frac{\partial F_{U_n(\boldsymbol{s};\boldsymbol{\alpha},\sigma,\lambda)}(x)}{\partial\alpha_j}\\
&=&s_iF^{\alpha_i}\left(\frac{x-\sigma}{\lambda}\right)\log\left(F\left( \frac{x-\sigma}{\lambda}\right)\right)I(x>\sigma+c\lambda)-s_jF^{\alpha_j}\left( \frac{x-\sigma}{\lambda}\right)\log\left(F\left(\frac{x-\sigma}{\lambda}\right)\right)\\
~~&\times&I(x>\sigma+c\lambda)\leq 0,
\end{eqnarray*}
whenever $\boldsymbol{s}\in\mathcal{D}_n^+$ and $\boldsymbol{\alpha}\in\mathcal{E}_n^+$. Thus, for $1\leq i<j\leq n$, we have $\Delta_{2}\leq 0$, implying that $\frac{\partial F_{U_n(\boldsymbol{s};\boldsymbol{\alpha},\sigma,\lambda)}(x)}{\partial\alpha_k}$ is increasing in $k\in\mathcal{I}_n$. Now, using Remark \ref{remark2.1} and Lemma \ref{lemma2.4}, we see that $F_{U_n(\boldsymbol{s};\boldsymbol{\alpha},\sigma,\lambda)}(x)$ is Schur-convex with respect to $\boldsymbol{\alpha}\in\mathcal{E}_n^+$. Thus, from Definition \ref{def2.4} 
\begin{eqnarray}\label{eq3.26}
\boldsymbol{\alpha}\stackrel{m}{\succcurlyeq}\boldsymbol{\beta}\Rightarrow F_{U_n(\boldsymbol{s};\boldsymbol{\alpha},\sigma,\lambda)}(x)\geq F_{U_n(\boldsymbol{s};\boldsymbol{\beta},\sigma,\lambda)}(x).
\end{eqnarray}
Furthermore, by the assumptions $\sigma\leq\mu$ and $\lambda\leq\theta$, we obtain $F^{\beta_i}(\frac{x-\sigma}{\lambda})I(x>\sigma+c\lambda)\geq F^{\beta_i}(\frac{x-\mu}{\theta})I(x>\mu+c\theta)$, for $i\in\mathcal{I}_n$, from which it is clear that
\begin{eqnarray}\label{eq3.27}
F_{U_n(\boldsymbol{s};\boldsymbol{\beta},\sigma,\lambda)}(x)\geq F_{U_n(\boldsymbol{s};\boldsymbol{\beta},\mu,\theta)}(x).
\end{eqnarray}
Now, upon combining the inequalities given in (\ref{eq3.25}), (\ref{eq3.26}), and (\ref{eq3.27}), the required usual stochastic ordering result readily follows. 
\end{proof}
	
\section{Stochastic comparisons between two multiple-outlier FMMs\setcounter{equation}{0}}\label{section4}
Here our focus is on stochastic comparisons of two FMMs with multiple-outlier ELS family distributed components. For this, we consider two FMMs with $n$ and $n^*$ numbers of RVs (components), respectively. In the first FMM, we assume that $n_1$ RVs are taken from a particular homogeneous subpopulation and the remaining $n_2$ RVs are taken from another type of homogeneous subpopulation, such that $n_1+n_2=n$. Similarly, in the second FMM, we consider that $n_1^*$ RVs are taken from a certain homogeneous subpopulation and the remaining $n_2^*$ RVs are drawn from another kind of homogeneous subpopulation, where $n_1^*+n_2^*=n^*$. The reversed hazard rate, likelihood ratio, and AFO in reversed hazard rate are investigated in this section. Before proceeding further, we introduce the following assumption.
\begin{assumption}\label{assumption4.1}
Consider $X_1,\ldots,X_{n_1}$ is a random sample of size $n_1$ taken from an absolutely continuous nonnegative RV $X^{(1)}\sim ELS(F,\alpha_1,\sigma_1,\lambda_1)$, and $X_{n_1+1},\ldots,X_n$ is another independent random sample of size $n_2$ from an absolutely continuous nonnegative RV $X^{(2)}\sim ELS(F,\alpha_2,\sigma_2,\lambda_2)$, where $n_1+n_2=n$. Further, assume that $Y_1,\ldots,Y_{n^*_1}$ is a random sample of size $n^*_1$ taken from an absolutely continuous nonnegative RV $Y^{(1)}\sim ELS(F,\beta_1,\mu_1,\theta_1)$, and $Y_{n^*_1+1},\ldots,Y_{n^*}$ is another independent random sample of size $n^*_2$ from an absolutely continuous nonnegative RV $Y^{(2)}\sim ELS(F,\beta_2,\mu_2,\theta_2)$, where $n^*_1+n^*_2=n^*$. Represent FM of $X_1,\ldots,X_n$ by $U$ with mixing proportions $r_i=r_1$ for $i=1,\ldots,n_1$ and $r_i=r_2$ for $i=n_1+1,\ldots,n$ such that $n_1r_1+n_2r_2=1$. Similarly, $V$ represents FM of $Y_1,\ldots,Y_n$ with mixing proportions $s_i=s_1$ for $i=1,\ldots,n^*_1$ and $s_i=s_2$ for $i=n^*_1+1,\ldots,n^*$ such that $n^*_1s_1+n^*_2s_2=1$.
\end{assumption}

In the next theorem, we derive the stochastic comparison result between $	U_{n}(\boldsymbol{r},\boldsymbol{\alpha},\boldsymbol{\sigma},\boldsymbol{\lambda})$ and $U_{n^*}(\boldsymbol{s},\boldsymbol{\alpha},\boldsymbol{\sigma},\boldsymbol{\lambda})$ with respect to the reversed hazard rate order. Throughout this section, bold symbols are used to denote the vectors having only two components. For example, $\boldsymbol{r}=(r_1,r_2)$, $\boldsymbol{s}=(s_1,s_2)$, $\boldsymbol{\alpha}=(\alpha_1,\alpha_2)$, $\boldsymbol{\sigma}=(\sigma_1,\sigma_2)$, and $\boldsymbol{\lambda}=(\lambda_1,\lambda_2)$. In this result, we have considered the number of the RVs in the multiple-outlier models are different, that is, $n_i\neq n_i^*$, for $i\in\mathcal{I}_2$. Here, the mixing proportion parameter vectors are different for both MMs.

\begin{theorem}\label{theorem4.1}
Under Assumption \ref{assumption4.1}, consider $X^{(i)}\stackrel{st}{=}Y^{(i)}$, for $i\in\mathcal{I}_2$. Assume that $\boldsymbol{\alpha}\in\mathcal{E}^+_2~(\mathcal{D}^+_2)$, $\boldsymbol{\lambda}\in\mathcal{E}^+_2~(\mathcal{D}^+_2)$, $\boldsymbol{\sigma}\in\mathcal{E}^+_2~(\mathcal{D}^+_2)$, and $t\tilde{h}(t)$ is decreasing in $t>c\geq 0$. Then, we have
\begin{eqnarray*}
U_{n}(\boldsymbol{r},\boldsymbol{\alpha},\boldsymbol{\sigma},\boldsymbol{\lambda})\leq_{rh}	U_{n^*}(\boldsymbol{s},\boldsymbol{\alpha},\boldsymbol{\sigma},\boldsymbol{\lambda}),	
\end{eqnarray*} 
provided that $n_1r_1{n_2^*}{s_2}\geq(\leq)~n_2r_2{n_1^*}{s_1}$.
\end{theorem}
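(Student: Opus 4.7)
The plan is to verify the reversed hazard rate ordering directly by showing that
\begin{equation*}
\psi(x) = \frac{F_{U_{n^*}(\boldsymbol{s},\boldsymbol{\alpha},\boldsymbol{\sigma},\boldsymbol{\lambda})}(x)}{F_{U_{n}(\boldsymbol{r},\boldsymbol{\alpha},\boldsymbol{\sigma},\boldsymbol{\lambda})}(x)}
\end{equation*}
is nondecreasing on the common support $x>\min_{i\in\mathcal{I}_2}\{\sigma_i+c\lambda_i\}$. Since each CDF is a two-term sum whose indicator supports are $(\sigma_i+c\lambda_i,\infty)$, I would first partition the ray into the \emph{single-indicator region}, where only the component with the smaller threshold is active, and the \emph{both-indicators region}. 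In the single-indicator region the numerator and denominator of $\psi$ each collapse to a single $F^{\alpha_i}(\cdot)$ factor, so $\psi$ is constant (a fixed ratio of the two relevant mixing weights) and trivially nondecreasing there; continuity at the joining point is automatic because the newly activated term evaluates to $F^{\alpha_j}(c)=0$.

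For the both-indicators region, I would mimic the derivation in the proof of Theorem~\ref{theorem3.2}. Writing $\psi'(x)\stackrel{sign}{=}F_U f_V - f_U F_V$ with the obvious shorthand and $A_i=(x-\sigma_i)/\lambda_i$, expanding the product of the two-term sums produces four summands in each piece; the two diagonal self-terms cancel identically, and after extracting the common positive factor $F^{\alpha_1-1}(A_1)F^{\alpha_2-1}(A_2)$, the surviving cross-terms collapse to
\begin{equation*}
\psi'(x)\stackrel{sign}{=} F^{\alpha_1}(A_1)\,F^{\alpha_2}(A_2)\,(T_2-T_1)\,(n_1r_1 n_2^*s_2 - n_2r_2 n_1^*s_1),
\end{equation*}
where $T_i=\frac{\alpha_i}{\lambda_i}\tilde{h}(A_i)$. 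The problem therefore reduces to pinning down the sign of $T_2-T_1$ and coupling it with the sign of the mixing-weight factor.

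To compare $T_1$ and $T_2$ I would use the key rewriting
\begin{equation*}
T_i \;=\; \frac{\alpha_i}{x-\sigma_i}\,g(A_i), \qquad g(t)=t\tilde{h}(t),
\end{equation*}
where $g$ is decreasing by hypothesis. In the $\mathcal{E}_2^+$ case, $\alpha_1\le\alpha_2$, $\sigma_1\le\sigma_2$ and $\lambda_1\le\lambda_2$ jointly imply $\frac{\alpha_1}{x-\sigma_1}\le\frac{\alpha_2}{x-\sigma_2}$ and $A_1\ge A_2$; combining these with the monotonicity of $g$ yields $T_1\le T_2$, and the hypothesis $n_1r_1 n_2^*s_2\ge n_2r_2 n_1^*s_1$ then forces $\psi'(x)\ge 0$. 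The $\mathcal{D}_2^+$ case is symmetric: every inequality flips, so $T_1\ge T_2$, and the companion hypothesis $n_1r_1 n_2^*s_2\le n_2r_2 n_1^*s_1$ again makes the product nonnegative.

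The main obstacle I foresee is the bookkeeping in the expansion of $F_Uf_V-f_UF_V$: one must carry the indicator functions consistently and verify that after cancelling the two diagonal self-terms, the remaining cross-terms really collapse to the single factored expression above, rather than to a more complicated sign-indeterminate combination. A secondary point worth spelling out is that the piecewise monotonicity obtained on the single- and both-indicators subintervals, coupled with continuity at the join (inherited from $F(c)=0$), is enough to conclude monotonicity of $\psi$ on the full ray $x>\min_{i}\{\sigma_i+c\lambda_i\}$.
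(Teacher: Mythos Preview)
Your proposal is correct and essentially identical to the paper's argument: the paper compares the two RHRFs directly (noting equality on the single-indicator region and then reducing the inequality on $x>\sigma_2+c\lambda_2$ to exactly your factored expression $(n_1r_1n_2^*s_2-n_2r_2n_1^*s_1)\,F^{\alpha_1}(A_1)F^{\alpha_2}(A_2)\,[T_1-T_2]\le 0$), while you reach the same identity via $\psi'(x)\stackrel{sign}{=}F_Uf_V-f_UF_V$. Both then conclude using precisely the rewriting $T_i=\frac{\alpha_i}{x-\sigma_i}\,g(A_i)$ with $g(t)=t\tilde h(t)$ decreasing.
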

	
\begin{proof}
For $\boldsymbol{\sigma}\in\mathcal{E}^+_2$ and $\boldsymbol{\lambda}\in\mathcal{E}_2^+$, the RHRF for the MRV $	U_{n}(\boldsymbol{r},\boldsymbol{\alpha},\boldsymbol{\sigma},\boldsymbol{\lambda})$ is obtained as
\begin{eqnarray*}
\tilde{h}_{	U_{n}(\boldsymbol{r},\boldsymbol{\alpha},\boldsymbol{\sigma},\boldsymbol{\lambda})}(x)=
\begin{cases}
0,&~if~x\leq\sigma_1+c\lambda_1;\\
\frac{\alpha_1f\left(\frac{x-\sigma_1}{\lambda_1}\right) }{\lambda_1F\left(\frac{x-\sigma_1}{\lambda_1}\right)}, &~if~\sigma_1+c\lambda_1<x\leq\sigma_2+c\lambda_2;\\
\frac{\frac{n_1r_1\alpha_1}{\lambda_1}F^{\alpha_1-1}\left(\frac{x-\sigma_1}{\lambda_1}\right)f\left(\frac{x-\sigma_1}{\lambda_1}\right)+\frac{n_2r_2\alpha_2}{\lambda_2}F^{\alpha_2-1}\left(\frac{x-\sigma_2}{\lambda_2}\right)f\left(\frac{x-\sigma_2}{\lambda_2}\right) }{n_1r_1F^{\alpha_1}\left(\frac{x-\sigma_1}{\lambda_1}\right)+n_2r_2F^{\alpha_2}\left(\frac{x-\sigma_2}{\lambda_2}\right)},&~if~x>\sigma_2+c\lambda_2.
\end{cases}
\end{eqnarray*}
To obtained the desired result, it is necessary to prove that $\tilde{h}_{	U_{n}(\boldsymbol{r},\boldsymbol{\alpha},\boldsymbol{\sigma},\boldsymbol{\lambda})}(x)\leq\tilde{h}_{	U_{n^*}(\boldsymbol{s},\boldsymbol{\alpha},\boldsymbol{\sigma},\boldsymbol{\lambda})}(x)$,
for all $x>\sigma_1+c\lambda_1$, where $c\in\mathbb{R^+}$. Firstly, in the subintervals $x\leq\sigma_1+c\lambda_1$ and $\sigma_1+c\lambda_1<x\leq\sigma_2+c\lambda_2$, it is evident that $\tilde{h}_{	U_{n}(\boldsymbol{r},\boldsymbol{\alpha},\boldsymbol{\sigma},\boldsymbol{\lambda})}(x)=\tilde{h}_{	U_{n^*}(\boldsymbol{s},\boldsymbol{\alpha},\boldsymbol{\sigma},\boldsymbol{\lambda})}(x).$ 
Secondly, for the subinterval $x>\sigma_2+c\lambda_2$, we are required to show that
\begin{align}\label{eq4.10}
&\frac{\frac{n_1r_1\alpha_1}{\lambda_1}F^{\alpha_1-1}\left(\frac{x-\sigma_1}{\lambda_1}\right)f\left(\frac{x-\sigma_1}{\lambda_1}\right)+\frac{n_2r_2\alpha_2}{\lambda_2}F^{\alpha_2-1}\left(\frac{x-\sigma_2}{\lambda_2}\right)f\left(\frac{x-\sigma_2}{\lambda_2}\right) }{n_1r_1F^{\alpha_1}\left(\frac{x-\sigma_1}{\lambda_1}\right)+n_2r_2F^{\alpha_2}\left(\frac{x-\sigma_2}{\lambda_2}\right)}\nonumber\\
\leq~&
\frac{\frac{n_1^*s_1\alpha_1}{\lambda_1}F^{\alpha_1-1}\left(\frac{x-\sigma_1}{\lambda_1}\right)f\left(\frac{x-\sigma_1}{\lambda_1}\right)+\frac{n_2^*s_2\alpha_2}{\lambda_2}F^{\alpha_2-1}\left(\frac{x-\sigma_2}{\lambda_2}\right)f\left(\frac{x-\sigma_2}{\lambda_2}\right) }{n_1^*s_1F^{\alpha_1}\left(\frac{x-\sigma_1}{\lambda_1}\right)+n_2^*s_2F^{\alpha_2}\left(\frac{x-\sigma_2}{\lambda_2}\right)}.
\end{align}
After simplifying the above inequality in (\ref{eq4.10}), we obtain
\begin{eqnarray*}
\left(n_1r_1n_2^*s_2-n_2r_2n_1^*s_1 \right)F^{\alpha_1}\left(\frac{x-\sigma_1}{\lambda_1} \right)
F^{\alpha_2}\left(\frac{x-\sigma_2}{\lambda_2} \right)\left[\frac{\alpha_1}{\lambda_1}\frac{f\left(\frac{x-\sigma_1}{\lambda_1} \right) }{F\left(\frac{x-\sigma_1}{\lambda_1} \right) }-\frac{\alpha_2}{\lambda_2}\frac{f\left(\frac{x-\sigma_2}{\lambda_2} \right) }{F\left(\frac{x-\sigma_2}{\lambda_2} \right) } \right] \leq 0,
\end{eqnarray*} 
which is equivalent to
\begin{eqnarray}\label{eq4.11}
\frac{1}{\lambda_1}\tilde{h}\left(\frac{x-\sigma_1}{\lambda_1}\right) \leq\frac{1}{\lambda_2}\tilde{h}\left(\frac{x-\sigma_2}{\lambda_2}\right), 
\end{eqnarray}
since $n_1r_1n_2^*s_2\geq n_2r_2n_1^*s_1$ and $\alpha_1\leq\alpha_2$ by the assumptions. Also, we can observe that
\begin{eqnarray}\label{eq4.15*}
\frac{1}{\lambda_i}\tilde{h}\left(\frac{x-\sigma_i}{\lambda_i} \right)=\frac{1}{x-\sigma_i}\left(\frac{x-\sigma_i}{\lambda_i} \right)\tilde{h}\left(\frac{x-\sigma_i}{\lambda_i} \right),  
\end{eqnarray}
for $i\in\mathcal{I}_2$. Now, under the assumptions, we have $\lambda_1\leq\lambda_2$, $\sigma_1\leq\sigma_2$, and $t\tilde{h}(t)$ is decreasing in $t>c$. Thus, after some simplification, using (\ref{eq4.15*}), the inequality in $(\ref{eq4.11})$ can be easily established. Hence, it follows that the inequality in $(\ref{eq4.10})$ also holds, which completes the proof. 
\end{proof}
	
The preceding theorem has established the reversed hazard rate ordering between two multiple-outlier MRVs. Thus, it is natural to examine if the reversed hazard rate order in Theorem \ref{theorem4.1} can be extended to some other stronger stochastic ordering results. In the following theorem, we have established the likelihood ratio order between $U_{n}(\boldsymbol{r},\boldsymbol{\alpha},\boldsymbol{\sigma},\boldsymbol{\lambda})$ and $	U_{n^*}(\boldsymbol{s},\boldsymbol{\alpha},\boldsymbol{\sigma},\boldsymbol{\lambda})$.
 
\begin{theorem}\label{theorem4.2}
Under Assumption \ref{assumption4.1}, consider $X^{(i)}\stackrel{st}{=}Y^{(i)}$, for $i\in\mathcal{I}_2$. Assume that $\boldsymbol{\alpha},\boldsymbol{\lambda},\boldsymbol{\sigma}\in\mathcal{E}^+_2$, $t\tilde{h}(t)$, and $\frac{tf^{\prime}(t)}{f(t)}$ are decreasing in $t>c\geq 0$. Then, we have
\begin{eqnarray*}
U_{n}(\boldsymbol{r},\boldsymbol{\alpha},\boldsymbol{\sigma},\boldsymbol{\lambda})\geq_{lr}	U_{n^*}(\boldsymbol{s},\boldsymbol{\alpha},\boldsymbol{\sigma},\boldsymbol{\lambda}),	
\end{eqnarray*}
provided that $\alpha_1\geq 1$, $\alpha_2\geq 1$, and $n_1r_1n_2^*s_2\leq n_2r_2n_1^*s_1$.
\end{theorem}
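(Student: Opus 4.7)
The plan is to verify the likelihood ratio order by showing that $\eta(x) = f_{U_n(\boldsymbol{r},\boldsymbol{\alpha},\boldsymbol{\sigma},\boldsymbol{\lambda})}(x)/f_{U_{n^*}(\boldsymbol{s},\boldsymbol{\alpha},\boldsymbol{\sigma},\boldsymbol{\lambda})}(x)$ is increasing on $(\sigma_1+c\lambda_1,\infty)$. Since both multiple-outlier PDFs share the thresholds $\sigma_1+c\lambda_1$ and $\sigma_2+c\lambda_2$, I split the domain into the two natural subintervals. On $(\sigma_1+c\lambda_1, \sigma_2+c\lambda_2]$ only the first subpopulation is active in either PDF, so $\eta(x)=n_1 r_1/(n_1^* s_1)$ is constant. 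The main work therefore concentrates on $x>\sigma_2+c\lambda_2$; at the boundary $x=\sigma_2+c\lambda_2$, a short direct check shows that the hypothesis $n_1 r_1 n_2^* s_2 \leq n_2 r_2 n_1^* s_1$ guarantees that any discontinuity in $\eta$ is an upward jump, so monotonicity on the two subintervals suffices.

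For $x>\sigma_2+c\lambda_2$, write $P_i(x)=\frac{\alpha_i}{\lambda_i}F^{\alpha_i-1}(t_i)f(t_i)$ with $t_i=(x-\sigma_i)/\lambda_i$, so that $f_{U_n}(x)=n_1 r_1 P_1(x)+n_2 r_2 P_2(x)$ and $f_{U_{n^*}}(x)=n_1^* s_1 P_1(x)+n_2^* s_2 P_2(x)$. Applying the quotient rule and cancelling the diagonal cross-terms gives
\begin{eqnarray*}
\eta'(x)\stackrel{sign}{=}\bigl(n_1 r_1 n_2^* s_2 - n_2 r_2 n_1^* s_1\bigr)\,P_1(x)P_2(x)\bigl[(\log P_1)'(x)-(\log P_2)'(x)\bigr].
\end{eqnarray*}
The hypothesis renders the first factor nonpositive, so establishing $\eta'(x)\geq 0$ reduces to proving $(\log P_1)'(x)\leq(\log P_2)'(x)$, i.e., that $P_1/P_2$ is decreasing in $x$.

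Differentiating and pulling out a factor of $t_i$ yields
\begin{eqnarray*}
(\log P_i)'(x)=\frac{1}{x-\sigma_i}\left[(\alpha_i-1)\,t_i\,\tilde{h}(t_i)+\frac{t_i f^{\prime}(t_i)}{f(t_i)}\right],
\end{eqnarray*}
which is the representation that brings both structural hypotheses to bear. From $\sigma_1\leq\sigma_2$ and $\lambda_1\leq\lambda_2$ I obtain $t_1\geq t_2>c$ and $0<x-\sigma_2\leq x-\sigma_1$. The decreasing monotonicity of $t\tilde{h}(t)$ and $tf^{\prime}(t)/f(t)$ then gives $t_1\tilde{h}(t_1)\leq t_2\tilde{h}(t_2)$ and $t_1 f^{\prime}(t_1)/f(t_1)\leq t_2 f^{\prime}(t_2)/f(t_2)$, while $0\leq\alpha_1-1\leq\alpha_2-1$ amplifies the first inequality to $(\alpha_1-1)t_1\tilde{h}(t_1)\leq(\alpha_2-1)t_2\tilde{h}(t_2)$. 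Summing bounds the bracketed quantity at index $1$ by that at index $2$, and dividing by the corresponding $x-\sigma_i$ (smaller denominator on the right) transfers this inequality to $(\log P_1)'\leq(\log P_2)'$. The main obstacle is the bookkeeping in this last division, since $t f^{\prime}(t)/f(t)$ need not have a definite sign; I intend to handle this by case analysis on the sign of the bracketed expression at $i=2$, using in each case the parameter inequalities $x-\sigma_1\geq x-\sigma_2>0$ to push the estimate through, thereby completing the proof.
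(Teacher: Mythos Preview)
Your approach is essentially identical to the paper's: both split at the thresholds, observe the ratio is constant on $(\sigma_1+c\lambda_1,\sigma_2+c\lambda_2]$, rewrite the ratio on $x>\sigma_2+c\lambda_2$ so that its sign reduces to that of $(n_1r_1n_2^*s_2-n_2r_2n_1^*s_1)\,\phi'(x)$ with $\phi=P_1/P_2$, and then argue $\phi'\le 0$ from exactly the same expression $\frac{\alpha_i-1}{x-\sigma_i}\,t_i\tilde h(t_i)+\frac{1}{x-\sigma_i}\,\frac{t_i f'(t_i)}{f(t_i)}$. The paper simply asserts ``it can be shown that $\phi'(x)\le 0$'' under the monotonicity hypotheses, without the boundary-jump remark or the sign case analysis you flag; your write-up is therefore slightly more careful than the paper's at precisely the point you identify as the main obstacle.
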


\begin{proof}
For $\boldsymbol{\sigma},\boldsymbol{\lambda}\in\mathcal{E}_2^+$, the PDF of the MRV $U_{n}(\boldsymbol{r},\boldsymbol{\alpha},\boldsymbol{\sigma},\boldsymbol{\lambda})$ is written as 
\begin{eqnarray*}
f_{	U_{n}(\boldsymbol{r},\boldsymbol{\alpha},\boldsymbol{\sigma},\boldsymbol{\lambda})}(x)=
\begin{cases}
0,&if~x\leq\sigma_1+c\lambda_1;\\
\frac{n_1r_1\alpha_1}{\lambda_1}F^{\alpha_1-1}\left(\frac{x-\sigma_1}{\lambda_1}\right)f\left(\frac{x-\sigma_1}{\lambda_1}\right),&if~\sigma_1+c\lambda_1<x\leq\sigma_2+c\lambda_2;\\
\sum_{i=1}^{2}\frac{n_ir_i\alpha_i}{\lambda_i}F^{\alpha_i-1}\left(\frac{x-\sigma_i}{\lambda_i}\right)f\left(\frac{x-\sigma_i}{\lambda_i}\right),&if~x>\sigma_2+c\lambda_2.
\end{cases}
\end{eqnarray*}
To get the desired result, we have to demonstrate that $\zeta(x)=f_{	U_{n}(\boldsymbol{r},\boldsymbol{\alpha},\boldsymbol{\sigma},\boldsymbol{\lambda})}(x)/f_{	U_{n^*}(\boldsymbol{s},\boldsymbol{\alpha},\boldsymbol{\sigma},\boldsymbol{\lambda})}(x)$ is increasing in $x>\sigma_1+c\lambda_1$, where $c\in\mathbb{R}^+$. Firstly, consider the subinterval $\sigma_1+c\lambda_1<x\leq\sigma_2+c\lambda_2$. Now, in this subinterval, we observe that 
\begin{eqnarray}\label{eq4.13}
\zeta(x)=\frac{n_1r_1}{n_1^*s_1},
\end{eqnarray}
which is a constant (independent of $x$), thus can be taken as an increasing function in wide-sense. Secondly, for the subinterval $x>\sigma_2+c\lambda_2$, we have to show that
\begin{eqnarray}\label{eq4.14}
\zeta(x)=\frac{\frac{n_1r_1\alpha_1}{\lambda_1}F^{\alpha_1-1}\left(\frac{x-\sigma_1}{\lambda_1}\right)f\left(\frac{x-\sigma_1}{\lambda_1}\right)+\frac{n_2r_2\alpha_2}{\lambda_2}F^{\alpha_2-1}\left(\frac{x-\sigma_2}{\lambda_2}\right)f\left(\frac{x-\sigma_2}{\lambda_2}\right)}{\frac{n_1^*r_1^*\alpha_1}{\lambda_1}F^{\alpha_1-1}\left(\frac{x-\sigma_1}{\lambda_1}\right)f\left(\frac{x-\sigma_1}{\lambda_1}\right)+\frac{n_2^*r_2^*\alpha_2}{\lambda_2}F^{\alpha_2-1}\left(\frac{x-\sigma_2}{\lambda_2}\right)f\left(\frac{x-\sigma_2}{\lambda_2}\right)}
\end{eqnarray}
is increasing in $x$. Note that the ratio given in (\ref{eq4.14}) can be written as

\begin{eqnarray}\label{eq4.15}
\zeta(x)=\frac{n_1r_1\phi(x)+n_2r_2}{n_1^*s_1\phi(x)+n_2^*s_2}=\frac{1}{n_1^*s_1}\left[n_1r_1+\frac{n_2r_2n_1^*s_1-n_1r_1n_2^*s_2}{n_1^*s_1\phi(x)+n_2^*s_2}\right],
\end{eqnarray}
where $\phi(x)=\frac{\frac{\alpha_1}{\lambda_1}F^{\alpha_1-1}\left(\frac{x-\sigma_1}{\lambda_1}\right)f\left(\frac{x-\sigma_1}{\lambda_1}\right) }{\frac{\alpha_2}{\lambda_2}F^{\alpha_2-1}\left(\frac{x-\sigma_2}{\lambda_2}\right) f\left(\frac{x-\sigma_2}{\lambda_2}\right)}$.
Differentiating (\ref{eq4.15}) partially with respect to $x$, we obtain
\begin{eqnarray}\label{eq4.17}
\zeta^{\prime}(x)=\frac{(n_1r_1n_2^*s_2-n_2r_2n_1^*s_1)\phi^{\prime}(x)}{(n_1^*s_1\phi(x)+n_2^*s_2)^2}.
\end{eqnarray}
Therefore, (\ref{eq4.15}) is increasing in $x$, if $\phi(x)$ is decreasing in $x$, since $n_1r_1n_2^*s_2\leq n_2r_2n_1^*s_1$ by assumption. Now, differentiating $\phi(x)$ with respect to $x$, we obtain
\allowdisplaybreaks{\begin{eqnarray}\label{eq4.18}
\phi^{\prime}(x)&=&\frac{\frac{\alpha_1}{\lambda_1}}{\frac{\alpha_2}{\lambda_2}\left[F^{\alpha_2-1}\left(\frac{x-\sigma_2}{\lambda_2}\right)f\left(\frac{x-\sigma_2}{\lambda_2}\right)\right]^2}
\Bigg[ \frac{1}{\lambda_1}F^{\alpha_2-1}\left(\frac{x-\sigma_2}{\lambda_2}\right)f\left(\frac{x-\sigma_2}{\lambda_2}\right)
\Bigg\{ (\alpha_1-1)F^{\alpha_1-2}\left(\frac{x-\sigma_1}{\lambda_1}\right)\nonumber\\
&&
\times f\left(\frac{x-\sigma_1}{\lambda_1}\right)^2+F^{\alpha_1-1}\left(\frac{x-\sigma_1}{\lambda_1}\right)f^{\prime}\left(\frac{x-\sigma_1}{\lambda_1}\right)\Bigg\}-
\frac{1}{\lambda_2}F^{\alpha_1-1}\left(\frac{x-\sigma_1}{\lambda_1}\right)f\left(\frac{x-\sigma_1}{\lambda_1}\right)\nonumber\\
&&
\times \Bigg\{ (\alpha_2-1)F^{\alpha_2-2}\left(\frac{x-\sigma_2}{\lambda_2}\right)
f\left(\frac{x-\sigma_2}{\lambda_2}\right)^2+F^{\alpha_2-1}\left(\frac{x-\sigma_2}{\lambda_2}\right)f^{\prime}\left(\frac{x-\sigma_2}{\lambda_2}\right)\Bigg\} \Bigg]\nonumber\\ 
&=&\frac{\frac{\alpha_1}{\lambda_1}}{\frac{\alpha_2}{\lambda_2}}\frac{F^{\alpha_1-1}\left(\frac{x-\sigma_1}{\lambda_1}\right)f\left(\frac{x-\sigma_1}{\lambda_1}\right) F^{\alpha_2-1}\left(\frac{x-\sigma_2}{\lambda_2}\right)f\left(\frac{x-\sigma_2}{\lambda_2}\right)}{\left[F^{\alpha_2-1}\left(\frac{x-\sigma_2}{\lambda_2}\right)f\left(\frac{x-\sigma_2}{\lambda_2}\right)\right]^2}\nonumber\\
&&\times\Bigg[\frac{\alpha_1-1}{\lambda_1}\frac{f\left( \frac{x-\sigma_1}{\lambda_1}\right)}{F\left( \frac{x-\sigma_1}{\lambda_1}\right)}-\frac{\alpha_2-1}{\lambda_2}\frac{f\left( \frac{x-\sigma_2}{\lambda_2}\right)}{F\left( \frac{x-\sigma_2}{\lambda_2}\right)}
+\frac{1}{\lambda_1}\frac{f^{\prime}\left(\frac{x-\sigma_1}{\lambda_1} \right) }{f\left(\frac{x-\sigma_1}{\lambda_1} \right)}-\frac{1}{\lambda_2}\frac{f^{\prime}\left(\frac{x-\sigma_2}{\lambda_2} \right) }{f\left(\frac{x-\sigma_2}{\lambda_2} \right)}\Bigg]\nonumber\\
&\stackrel{sign}{=}&\frac{\alpha_1-1}{x-\sigma_1}\frac{x-\sigma_1}{\lambda_1}\frac{f\left( \frac{x-\sigma_1}{\lambda_1}\right)}{F\left( \frac{x-\sigma_1}{\lambda_1}\right)}-\frac{\alpha_2-1}{x-\sigma_2}\frac{x-\sigma_2}{\lambda_2}\frac{f\left( \frac{x-\sigma_2}{\lambda_2}\right)}{F\left( \frac{x-\sigma_2}{\lambda_2}\right)}\nonumber\\
&&+\frac{1}{x-\sigma_1}\frac{x-\sigma_1}{\lambda_1}\frac{f^{\prime}\left(\frac{x-\sigma_1}{\lambda_1} \right) }{f\left(\frac{x-\sigma_1}{\lambda_1} \right)}
-\frac{1}{x-\sigma_2}\frac{x-\sigma_2}{\lambda_2}\frac{f^{\prime}\left(\frac{x-\sigma_2}{\lambda_2} \right) }{f\left(\frac{x-\sigma_2}{\lambda_2} \right)}.
\end{eqnarray}}
Under the assumptions, $1\leq\alpha_1\leq\alpha_2$, $\sigma_1\leq\sigma_2$, $\lambda_1\leq\lambda_2$, $t\tilde{h}(t)$, and $\frac{tf^{\prime}(t)}{f(t)}$ are decreasing in $t>c$, it can be shown that $\phi^{\prime}(x)\leq 0$. Hence, the proof is completed.
\end{proof}

In the next result, we establish AFO in terms of the reversed hazard rate between two multiple outlier MRVs. 
	
\begin{theorem}\label{theorem4.3}
Under Assumption \ref{assumption4.1}, let $\sigma_i=\sigma$, $\mu_i=\mu$, $\alpha_i=\alpha\in(0,1]$, and $c=0$, for $i\in\mathcal{I}_2$. Assume that $\sigma\geq\mu$ and $\min\{\lambda_1,\lambda_2\}\geq\max\{\theta_1,\theta_2\}$. Then, we have
\begin{eqnarray*}
U_{n}(\boldsymbol{r};{\alpha},\sigma,\boldsymbol{\lambda})\leq_{R-rh}V_{ n^*}(\boldsymbol{s};{\alpha},\mu,\boldsymbol{\theta}),
\end{eqnarray*}
provided that $\frac{f^\prime(t)}{f(t)}$ is increasing and $t\tilde{h}(t)$ is decreasing in $t>0$. 
\end{theorem}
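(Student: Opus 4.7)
The plan is to show $\Psi(x):=\tilde h_{U_{n}}(x)/\tilde h_{V_{n^*}}(x)$ is nondecreasing on $x>\sigma$, which is the relevant range since $\sigma\geq\mu$ and $c=0$. Differentiating logarithmically, this is equivalent to
\begin{eqnarray*}
\frac{f_{U_{n}}'(x)}{f_{U_{n}}(x)}-\tilde h_{U_{n}}(x)\ \geq\ \frac{f_{V_{n^*}}'(x)}{f_{V_{n^*}}(x)}-\tilde h_{V_{n^*}}(x),
\end{eqnarray*}
via the identity $(\log\tilde h_W)'(x)=f_W'(x)/f_W(x)-\tilde h_W(x)$, which holds for any absolutely continuous RV $W$ with positive density. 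The strategy is therefore to compute both sides explicitly using the common shape $\alpha$ and the common locations $\sigma,\mu$, and to compare them under the given monotonicity hypotheses.

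First, I would write out $f_{U_n}$, $f_{U_n}'$, $F_{U_n}$ and the analogues for $V_{n^*}$ using the substitutions $u_i=(x-\sigma)/\lambda_i$ and $v_j=(x-\mu)/\theta_j$. Expanding $f_{U_n}'$ term-by-term yields
\begin{eqnarray*}
\frac{f_{U_{n}}'(x)}{f_{U_{n}}(x)}=\frac{\sum_{i=1}^{2}\frac{n_ir_i\alpha}{\lambda_i^2}F^{\alpha-1}(u_i)f(u_i)\bigl[(\alpha-1)\tilde h(u_i)+f'(u_i)/f(u_i)\bigr]}{\sum_{i=1}^{2}\frac{n_ir_i\alpha}{\lambda_i}F^{\alpha-1}(u_i)f(u_i)},
\end{eqnarray*}
and similarly for $V_{n^*}$. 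From $\sigma\geq\mu$ and $\min\{\lambda_1,\lambda_2\}\geq\max\{\theta_1,\theta_2\}$, I obtain $u_i\leq v_j$ for every $i,j\in\mathcal{I}_2$ and all $x>\sigma$, together with $x-\sigma\leq x-\mu$. These orderings are the bridge that lets me compare quantities at $u_i$ with those at $v_j$ using the scalar monotonicities of the baseline.

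The two baseline hypotheses then enter as follows. Using that $t\tilde h(t)$ is decreasing, combined with $u_i\leq v_j$ and $x-\sigma\leq x-\mu$, I get $\tilde h(u_i)/\lambda_i\geq\tilde h(v_j)/\theta_j$, which in particular implies $\tilde h_{U_n}(x)\geq\tilde h_{V_{n^*}}(x)$. Using $f'/f$ increasing, I get $f'(u_i)/f(u_i)\leq f'(v_j)/f(v_j)$. The restriction $\alpha\in(0,1]$ controls the sign of $\alpha-1$, which multiplies $\tilde h$ inside the bracket of the expression for $f_{U_n}'/f_{U_n}$; this sign is exactly what is needed so that the $\tilde h$-contribution in $f'/f-\tilde h$ aligns with the contribution from the standalone $-\tilde h_{U_n}+\tilde h_{V_{n^*}}$ on the right-hand side of the desired inequality.

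The delicate part, which I expect to be the main obstacle, is the algebraic bookkeeping once denominators are cleared. The weight system for $f_{U_n}'/f_{U_n}$ (proportional to $F^{\alpha-1}(u_i)f(u_i)/\lambda_i$) and the weight system for $\tilde h_{U_n}$ (proportional to $F^\alpha(u_i)$) are different, so one cannot match terms indexwise. The natural path is to rewrite the target inequality as a double sum over $(i,j)\in\mathcal{I}_2\times\mathcal{I}_2$ and show nonnegativity of each of the four cross-terms by combining $u_i\leq v_j$, the sign of $\alpha-1$, and the two baseline monotonicities. Ensuring that all four cross-terms point in a consistent direction, so that the negative factor $\alpha-1$ does not cancel the useful pull of $t\tilde h(t)$ decreasing against the useful push of $f'/f$ increasing, is where the genuine difficulty lies.
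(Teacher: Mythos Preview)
Your target inequality has the wrong sign. You set out to show that $\Psi(x)=\tilde h_{U_n}(x)/\tilde h_{V_{n^*}}(x)$ is nondecreasing, but the paper's proof (and its numerical Example~\ref{example5.7}) shows this ratio is \emph{nonincreasing} on $x>\sigma$; the statement of Definition~\ref{def2.2} appears to carry a sign slip relative to the intended convention. Your own intermediate findings already signal this: you correctly derive $\tilde h_{U_n}\geq\tilde h_{V_{n^*}}$ and $f'(u_i)/f(u_i)\leq f'(v_j)/f(v_j)$, and both of these push
\[
(\log\Psi)'(x)=\Bigl(\frac{f_{U_n}'}{f_{U_n}}-\tilde h_{U_n}\Bigr)-\Bigl(\frac{f_{V_{n^*}}'}{f_{V_{n^*}}}-\tilde h_{V_{n^*}}\Bigr)
\]
toward being \emph{nonpositive}, not nonnegative. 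That is exactly why you find the cross-terms fail to ``point in a consistent direction'' for your stated goal: they point consistently the other way.

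Once the direction is corrected, your method coincides with the paper's. The paper writes
\[
\Lambda'(x)\stackrel{sign}{=}F_{U_n}F_{V_{n^*}}\bigl[f_{U_n}'f_{V_{n^*}}-f_{U_n}f_{V_{n^*}}'\bigr]+f_{U_n}f_{V_{n^*}}\bigl[F_{U_n}f_{V_{n^*}}-f_{U_n}F_{V_{n^*}}\bigr]
\]
and reduces nonpositivity to the two \emph{separate} inequalities $f_{U_n}'f_{V_{n^*}}\leq f_{U_n}f_{V_{n^*}}'$ and $F_{U_n}f_{V_{n^*}}\leq f_{U_n}F_{V_{n^*}}$. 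Each of these is expanded as a double sum over $(i,j)\in\mathcal I_2\times\mathcal I_2$ with nonnegative coefficients, and each $(i,j)$-bracket is handled pointwise using $u_i\leq v_j$, $x-\sigma\leq x-\mu$, $\lambda_i\geq\theta_j$, together with $t\tilde h(t)$ decreasing, $f'/f$ increasing, and the sign of $\alpha-1\leq0$. Your concern about ``different weight systems'' dissolves under this separation: within each of the two inequalities there is no mixing of the $F^{\alpha-1}f$-weights with the $F^{\alpha}$-weights, and the termwise bounds for every $(i,j)$ suffice.
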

	
\begin{proof}
For $\sigma_1=\sigma_2=\sigma$ and $c=0$, the reversed hazard rate for the MRV $U_{n}(\boldsymbol{r};{\alpha},\sigma,\boldsymbol{\lambda})$ is as follows
\begin{eqnarray}\label{eq4.19}
\tilde{h}_{U_{n}(\boldsymbol{r};{\alpha},\sigma,\boldsymbol{\lambda})}(x)=\frac{f_{U_{n}(\boldsymbol{r};{\alpha},\sigma,\boldsymbol{\lambda})}(x)}{F_{U_{n}(\boldsymbol{r};{\alpha},\sigma,\boldsymbol{\lambda})}(x)}=\frac{\sum_{i=1}^{2}\frac{n_ir_i\alpha_i}{\lambda_i}F^{\alpha-1}\left(\frac{x-\sigma}{\lambda_i}\right)f\left(\frac{x-\sigma}{\lambda_i} \right)}{\sum_{i=1}^{2}n_ir_iF^{\alpha}\left(\frac{x-\sigma}{\lambda_i} \right)}~\mbox{if}~x>\sigma.
\end{eqnarray}	
To get the necessary result, we need to show that $\Lambda(x)=\frac{\tilde{h}_{U_{n}(\boldsymbol{r};{\alpha},\sigma,\boldsymbol{\lambda})}(x)}{\tilde{h}_{V_{n^*}(\boldsymbol{r^*};{\alpha},\mu,\boldsymbol{\theta})}(x)}$ is decreasing in $x>\max\{\sigma,\mu\}$. For this purpose, we obtain 
		
\begin{eqnarray}
\Lambda^\prime(x)&=&\left( \frac{f_{U_{n}(\boldsymbol{r};{\alpha},\sigma,\boldsymbol{\lambda})}(x)F_{V_{ n^*}(\boldsymbol{s};{\alpha},\mu,\boldsymbol{\theta})}(x)}{F_{U_{n}(\boldsymbol{r};{\alpha},\sigma,\boldsymbol{\lambda})}(x)f_{V_{ n^*}(\boldsymbol{s};{\alpha},\mu,\boldsymbol{\theta})}(x)}\right)^\prime\nonumber\\
&=&\frac{1}{(F_{U_{n}(\boldsymbol{r};{\alpha},\sigma,\boldsymbol{\lambda})}(x)f_{V_{n^*}(\boldsymbol{s};{\alpha},\mu,\boldsymbol{\theta})}(x))^2} \Big[F_{U_{n}(\boldsymbol{r};{\alpha},\sigma,\boldsymbol{\lambda})}(x)f_{V_{ n^*}(\boldsymbol{s};{\alpha},\mu,\boldsymbol{\theta})}(x)\nonumber\\
&&\times\Big\{f^{\prime}_{U_{n}(\boldsymbol{r};{\alpha},\sigma,\boldsymbol{\lambda})}(x)F_{V_{ n^*}(\boldsymbol{s};{\alpha},\mu,\boldsymbol{\theta})}(x)+f_{U_{n}(\boldsymbol{r};{\alpha},\sigma,\boldsymbol{\lambda})}(x)f_{V_{ n^*}(\boldsymbol{s};{\alpha},\mu,\boldsymbol{\theta})}(x)\Big\}\nonumber\\
&&-f_{U_{n}(\boldsymbol{r};{\alpha},\sigma,\boldsymbol{\lambda})}(x)F_{V_{ n^*}(\boldsymbol{s};{\alpha},\mu,\boldsymbol{\theta})}(x)\Big\{f_{U_{n}(\boldsymbol{r};{\alpha},\sigma,\boldsymbol{\lambda})}(x)f_{V_{ n^*}(\boldsymbol{s};{\alpha},\mu,\boldsymbol{\theta})}(x)\nonumber\\
&&+F_{U_{n}(\boldsymbol{r};{\alpha},\sigma,\boldsymbol{\lambda})}(x)f^{\prime}_{V_{ n^*}(\boldsymbol{s};{\alpha},\mu,\boldsymbol{\theta})}(x)\Big\}\Big]\nonumber\\
&\stackrel{sign}{=}&F_{U_{n}(\boldsymbol{r};{\alpha},\sigma,\boldsymbol{\lambda})}(x)F_{V_{ n^*}(\boldsymbol{s};{\alpha},\mu,\boldsymbol{\theta})}(x)[f^\prime_{U_{n}(\boldsymbol{r};{\alpha},\sigma,\boldsymbol{\lambda})}(x)f_{V_{ n^*}(\boldsymbol{s};{\alpha},\mu,\boldsymbol{\theta})}(x)\nonumber\\
&&-f_{U_{n}(\boldsymbol{r};{\alpha},\sigma,\boldsymbol{\lambda})}(x)f^\prime_{V_{ n^*}(\boldsymbol{s};{\alpha},\mu,\boldsymbol{\theta})}(x)]+f_{U_{n}(\boldsymbol{r};{\alpha},\sigma,\boldsymbol{\lambda})}(x)f_{V_{ n^*}(\boldsymbol{s};{\alpha},\mu,\boldsymbol{\theta})}(x)\nonumber\\
&&\times[F_{U_{n}(\boldsymbol{r};{\alpha},\sigma,\boldsymbol{\lambda})}(x)f_{V_{ n^*}(\boldsymbol{s};{\alpha},\mu,\boldsymbol{\theta})}(x)-f_{U_{n}(\boldsymbol{r};{\alpha},\sigma,\boldsymbol{\lambda})}(x)F_{V_{ n^*}(\boldsymbol{s};{\alpha},\mu,\boldsymbol{\theta})}(x)].\nonumber
\end{eqnarray}
Thus, the ratio $\Lambda(x)$ is decreasing in $x>\max\{\sigma,\mu\}$, if
\begin{eqnarray}\label{eq4.20}
f^\prime_{U_{n}(\boldsymbol{r};{\alpha},\sigma,\boldsymbol{\lambda})}(x)f_{V_{ n^*}(\boldsymbol{s};{\alpha},\mu,\boldsymbol{\theta})}(x)\leq f_{U_{n}(\boldsymbol{r};{\alpha},\sigma,\boldsymbol{\lambda})}(x)f^\prime_{V_{ n^*}(\boldsymbol{s};{\alpha},\mu,\boldsymbol{\theta})}(x)
\end{eqnarray}
and
\begin{eqnarray}\label{eq4.21}
F_{U_{n}(\boldsymbol{r};{\alpha},\sigma,\boldsymbol{\lambda})}(x)f_{V_{ n^*}(\boldsymbol{s};{\alpha},\mu,\boldsymbol{\theta})}(x)\leq f_{U_{n}(\boldsymbol{r};{\alpha},\sigma,\boldsymbol{\lambda})}(x)F_{V_{ n^*}(\boldsymbol{s};{\alpha},\mu,\boldsymbol{\theta})}(x).
\end{eqnarray}
Now, the inequality in (\ref{eq4.20}) can be rewritten as
\begin{align}\label{eq4.22}
&\sum_{i=1}^{2}\sum_{j=1}^{2}\frac{\alpha^2n_ir_in_j^*s_j}{\lambda_i^2\theta_j}F^{\alpha-2}\left(\frac{x-\sigma}{\lambda_i}\right)F^{\alpha-1}\left(\frac{x-\mu}{\theta_j}\right)f\left(\frac{x-\mu}{\theta_j}\right) \Big[ (\alpha-1)f^2\left(\frac{x-\sigma}{\lambda_i}\right)\nonumber\\
&+F\left(\frac{x-\sigma}{\lambda_i}\right)f^\prime\left(\frac{x-\sigma}{\lambda_i}\right)\Big]\leq \sum_{i=1}^{2}\sum_{j=1}^{2}\frac{\alpha^2n_ir_in_j^*s_j}{\lambda_i\theta_j^2}F^{\alpha-1}\left(\frac{x-\sigma}{\lambda_i}\right)F^{\alpha-2}\left(\frac{x-\mu}{\theta_j}\right)f\left(\frac{x-\sigma}{\lambda_i}\right)\nonumber\\
& \times\Big[ (\alpha-1)f^2\left(\frac{x-\mu}{\theta_j}\right)+F\left(\frac{x-\mu}{\theta_j}\right)f^\prime\left(\frac{x-\mu}{\theta_j}\right)\Big]\nonumber\\
\Rightarrow&
\sum_{i=1}^{2}\sum_{j=1}^{2}\frac{\alpha^2n_ir_in_j^*s_j}{\lambda_i\theta_j}F^{\alpha-1}\left(\frac{x-\sigma}{\lambda_i}\right)F^{\alpha-1}\left(\frac{x-\mu}{\theta_j}\right)f\left( \frac{x-\sigma}{\lambda_i}\right)f\left(\frac{x-\mu}{\theta_j}\right) \Big[\frac{\alpha-1}{\lambda_i}\frac{f(\frac{x-\sigma}{\lambda_i})}{F(\frac{x-\sigma}{\lambda_i})}\nonumber\\
&-\frac{\alpha-1}{\theta_j}\frac{f(\frac{x-\mu}{\theta_j})}{F(\frac{x-\mu}{\theta_j})}+\frac{1}{\lambda_i}\frac{f^\prime(\frac{x-\sigma}{\lambda_i})}{f(\frac{x-\sigma}{\lambda_i})}-
\frac{1}{\theta_j}\frac{f^\prime(\frac{x-\mu}{\theta_j})}{f(\frac{x-\mu}{\theta_j})}\Big]\leq 0.
\end{align}
Also, the inequality in (\ref{eq4.21}) can be rewritten as
\begin{align}\label{eq4.23}
&\sum_{i=1}^{2}\sum_{j=1}^{2}\frac{n_ir_in_j^*s_j\alpha}{\theta_j}F^{\alpha}\left(\frac{x-\sigma}{\lambda_i}\right)F^{\alpha-1}\left(\frac{x-\mu}{\theta_j}\right)f\left(\frac{x-\mu}{\theta_j}\right)
\leq
\sum_{i=1}^{2}\sum_{j=1}^{2}\frac{n_ir_in_j^*s_j\alpha}{\lambda_i}F^{\alpha-1}\left(\frac{x-\sigma}{\lambda_i}\right)\nonumber\\
&\times F^{\alpha}\left(\frac{x-\mu}{\theta_j}\right)f\left(\frac{x-\sigma}{\lambda_i}\right)\nonumber\\
\Rightarrow&\sum_{i=1}^{2}\sum_{j=1}^{2}n_ir_in_j^*s_jF^{\alpha}\left(\frac{x-\sigma}{\lambda_i}\right)F^{\alpha}\left(\frac{x-\mu}{\theta_j}\right)\Bigg[\frac{\alpha}{\theta_j}\frac{f(\frac{x-\mu}{\theta_j})}{F(\frac{x-\mu}{\theta_j})}-\frac{\alpha}{\lambda_i}\frac{f(\frac{x-\sigma}{\lambda_i})}{F(\frac{x-\sigma}{\lambda_i})}\Bigg]\leq 0.
\end{align}
Now, using the assumptions $\sigma\geq\mu$, $\min\{\lambda_1,\lambda_2\}\geq\max\{\theta_1,\theta_2\}$, $\alpha\in(0,1]$, $\frac{f^{\prime}(t)}{f(t)}$ is increasing and $t\tilde{h}(t)$ is decreasing in $t>0$, we get the following inequalities
\begin{itemize}
\item[$(a)$] $\frac{\alpha-1}{\lambda_i}\frac{f(\frac{x-\sigma}{\lambda_i})}{F(\frac{x-\sigma}{\lambda_i})}\leq\frac{\alpha-1}{\theta_j}\frac{f(\frac{x-\mu}{\theta_j})}{F(\frac{x-\mu}{\theta_j})}$;
\item[$(b)$]
$\frac{1}{\lambda_i}\frac{f^\prime(\frac{x-\sigma}{\lambda_i})}{f(\frac{x-\sigma}{\lambda_i})}\leq\frac{1}{\theta_j}\frac{f^\prime(\frac{x-\mu}{\theta_j})}{f(\frac{x-\mu}{\theta_j})}$;
\item[$(c)$] $\frac{\alpha}{\lambda_i}\frac{f(\frac{x-\sigma}{\lambda_i})}{F(\frac{x-\sigma}{\lambda_i})}\geq\frac{\alpha}{\theta_j}\frac{f(\frac{x-\mu}{\theta_j})}{F(\frac{x-\mu}{\theta_j})}$,
\end{itemize}
for $i,j\in\mathcal{I}_2$. Note that, from $(a)$ and $(b)$, the inequality in (\ref{eq4.22}) follows. Further, from $(c)$, the inequality in (\ref{eq4.23}) can be established. Hence, the proof is completed.
\end{proof}
	
\section{Numerical examples and counterexamples}\label{section5}
Various examples and counterexamples are provided in this section to examine the main results established in Sections \ref{section3} and \ref{section4}. Theorem \ref{theorem3.1} is demonstrated by the following example. 
\begin{example}\label{example4.1}
Consider the Pareto distribution as the baseline distribution with CDF $F(x)=1-x^{-5},~x\geq1$. Let $\boldsymbol{r}=\left(0.40,0.55,0.05\right)$, 
$\boldsymbol{\lambda}=\left(2,4,6\right)\in\mathcal{E}^+_3$, $\boldsymbol{\theta}=\left(6,7,9\right)\in\mathcal{E}^+_3$, 
$\boldsymbol{\sigma}=\left(1,2,3\right)\in\mathcal{E}^+_3$, 
$\boldsymbol{\mu}=\left(2,4,5\right)\in\mathcal{E}^+_3$, 
$\boldsymbol{\alpha}=\left(5,2,7\right)$, and 
$\boldsymbol{\beta}=\left(9,10,8\right)$. Here, it is clear that $\lambda_i\leq\theta_i,~\sigma_i\leq\mu_i,$ and $\alpha_i\leq\beta_i$, for $i\in\mathcal{I}_3$. In Figure \ref{figure1}, we have plotted the graphs of the SFs of $U_3(\boldsymbol{r};\boldsymbol{\alpha},\boldsymbol{\sigma},\boldsymbol{\lambda})$ and $U_3(\boldsymbol{r};\boldsymbol{\beta},\boldsymbol{\mu},\boldsymbol{\theta})$. From this figure, it is observed that $U_3(\boldsymbol{r};\boldsymbol{\alpha},\boldsymbol{\sigma},\boldsymbol{\lambda})\leq_{st}U_3(\boldsymbol{r};\boldsymbol{\beta},\boldsymbol{\mu},\boldsymbol{\theta})$, validating the established result in Theorem \ref{theorem3.1}.  
\end{example}
	
Next, we consider a counterexample to show that the conditions ``$\lambda_i\leq\theta_i,~\sigma_i\leq\mu_i,~\mbox{and}~\alpha_i\leq\beta_i,~\mbox{for}~i\in\mathcal{I}_3$'' are needed to get the required usual stochastic ordering in Theorem \ref{theorem3.1}.	
	
\begin{counterexample}\label{Coun4.1}
Consider the baseline CDF as $F(x)=1-(\frac{4}{x})^{2},~x\geq4$. Further, let $\boldsymbol{r}=\left(0.1,0.3,0.6\right)$,  
$\boldsymbol{\sigma}=\left(2,6,7\right)\in\mathcal{E}_3^+$, $\boldsymbol{\lambda}=\left(1,5,8\right)\in\mathcal{E}_3^+$, 
$\boldsymbol{\mu}=\left(4,5,6\right)\in\mathcal{E}_3^+$, 
$\boldsymbol{\theta}=\left(2,3,4\right)\in\mathcal{E}_3^+$, 
$\boldsymbol{\alpha}=\left(7,6,1\right)$, and 
$\boldsymbol{\beta}=\left(6,9,9\right)$. Here, $\alpha_i\nleq\beta_i$, $\sigma_i\nleq\mu_i$, and $\lambda_i\nleq\theta_i$, for some $i\in\mathcal{I}_3$. In Figure \ref{figure2}, the graphs of the SFs of $U_3(\boldsymbol{r};\boldsymbol{\alpha},\boldsymbol{\sigma},\boldsymbol{\lambda})$ and $U_3(\boldsymbol{r};\boldsymbol{\beta},\boldsymbol{\mu},\boldsymbol{\theta})$ are presented, from which clearly $U_3(\boldsymbol{r};\boldsymbol{\alpha},\boldsymbol{\sigma},\boldsymbol{\lambda})\nleq_{st}U_3(\boldsymbol{r};\boldsymbol{\beta},\boldsymbol{\mu},\boldsymbol{\theta})$, not validating the established result in Theorem \ref{theorem3.1}.  
\end{counterexample}
	
\begin{figure}[h!]
\begin{center}
\subfigure[]{\label{figure1}\includegraphics[width=3.2in]{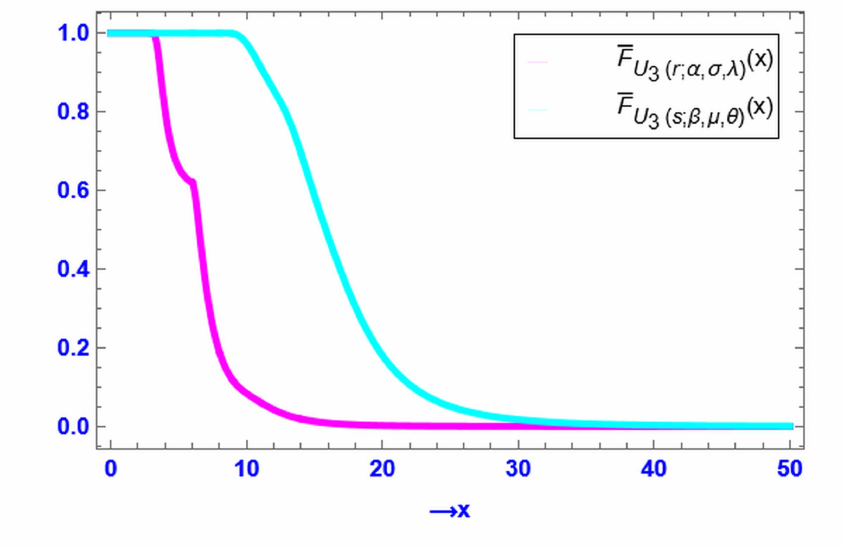}}
\subfigure[]{\label{figure2}\includegraphics[width=3.2in]{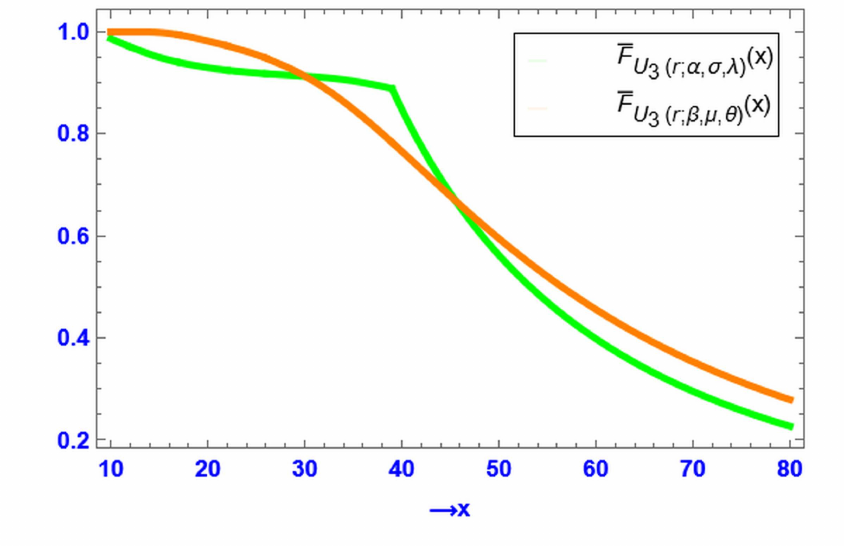}}
\caption{$(a)$ Graph of the SFs of $U_3(\boldsymbol{r};\boldsymbol{\alpha},\boldsymbol{\sigma},\boldsymbol{\lambda})$ (magenta curve) and $U_3(\boldsymbol{r};\boldsymbol{\beta},\boldsymbol{\mu},\boldsymbol{\theta})$ (cyan curve) in Example \ref{example4.1}. 
$(b)$ Representation of the SFs of $U_3(\boldsymbol{r};\boldsymbol{\alpha},\boldsymbol{\sigma},\boldsymbol{\lambda})$ (green curve) and $U_3(\boldsymbol{r};\boldsymbol{\beta},\boldsymbol{\mu},\boldsymbol{\theta})$ (orange curve) in Counterexample \ref{Coun4.1}.}
\end{center}
\end{figure}
	
Next, the following two consecutive counterexamples show that the result in Theorem \ref{theorem3.1} can not be extended from the usual stochastic ordering to the reversed hazard rate and likelihood ratio orderings under the same setup as considered in Theorem \ref{theorem3.1}.
	
\begin{counterexample}\label{Coun4.2}
Consider the Pareto distribution as the baseline distribution with CDF $F(x)=1-(\frac{2}{x})^{6},~x\geq2$. Set $\boldsymbol{r}=(0.2,0.3,0.5)$, 
$\boldsymbol{\lambda}=(1,4,6)\in\mathcal{E}^+_3$, $\boldsymbol{\theta}=(2,8,10)\in\mathcal{E}^+_3$, 
$\boldsymbol{\sigma}=(3,10,14)\in\mathcal{E}^+_3$, 
$\boldsymbol{\mu}=(6,11,16)\in\mathcal{E}^+_3$, 
$\boldsymbol{\alpha}=(4,6,5)$, and 
$\boldsymbol{\beta}=(8,7,11)$. Here, $\lambda_i\leq\theta_i$, $\sigma_i\leq\mu_i$, and  $\alpha_i\leq\beta_i$ for $i\in\mathcal{I}_3$. Figure \ref{figure3} displays the plot of the ratio of the CDFs of $U_3(\boldsymbol{r};\boldsymbol{\beta},\boldsymbol{\mu},\boldsymbol{\theta})$ and $U_3(\boldsymbol{r};\boldsymbol{\alpha},\boldsymbol{\sigma},\boldsymbol{\lambda})$, which evidently exhibits a non-monotonic behavior. This indicates that, under the same conditions as stated in Theorem \ref{theorem3.1}, the reversed hazard rate ordering between the MRVs $U_3(\boldsymbol{r};\boldsymbol{\alpha},\boldsymbol{\sigma},\boldsymbol{\lambda})$ and $U_3(\boldsymbol{r};\boldsymbol{\beta},\boldsymbol{\mu},\boldsymbol{\theta})$ fails to be satisfied. 
\end{counterexample}
	
\begin{counterexample}\label{Coun4.22}
Consider the baseline distribution with PDF $f(x)=3.5^3x^{-4},~x\geq5$. Assume that $\boldsymbol{r}=(0.3,0.2,0.5)$, 
$\boldsymbol{\lambda}=(1,2,5)\in\mathcal{E}^+_3$, $\boldsymbol{\theta}=(2,4,6)\in\mathcal{E}^+_3$, 
$\boldsymbol{\sigma}=(1,2,7)\in\mathcal{E}^+_3$, 
$\boldsymbol{\mu}=(5,7,8)\in\mathcal{E}^+_3$, 
$\boldsymbol{\alpha}=(2,6,5)$, and 
$\boldsymbol{\beta}=(6,7,9)$. Clearly, $\lambda_i\leq\theta_i$, $\sigma_i\leq\mu_i$, and $\alpha_i\leq\beta_i$, for $i\in\mathcal{I}_3$. In Figure \ref{figure4}, the graph of the ratio of the PDFs of $U_3(\boldsymbol{r};\boldsymbol{\beta},\boldsymbol{\mu},\boldsymbol{\theta})$ and $U_3(\boldsymbol{r};\boldsymbol{\alpha},\boldsymbol{\sigma},\boldsymbol{\lambda})$ is depicted. In this graph, we notice that the ratio is non-monotone, revealing the non-existence of likelihood ratio order between $U_3(\boldsymbol{r};\boldsymbol{\alpha},\boldsymbol{\sigma},\boldsymbol{\lambda})$ and $U_3(\boldsymbol{r};\boldsymbol{\beta},\boldsymbol{\mu},\boldsymbol{\theta})$, under the same setup in Theorem \ref{theorem3.1}.
\end{counterexample}
	
\begin{figure}[h!]
\begin{center}
\subfigure[]{\label{figure3}\includegraphics[width=3.2in]{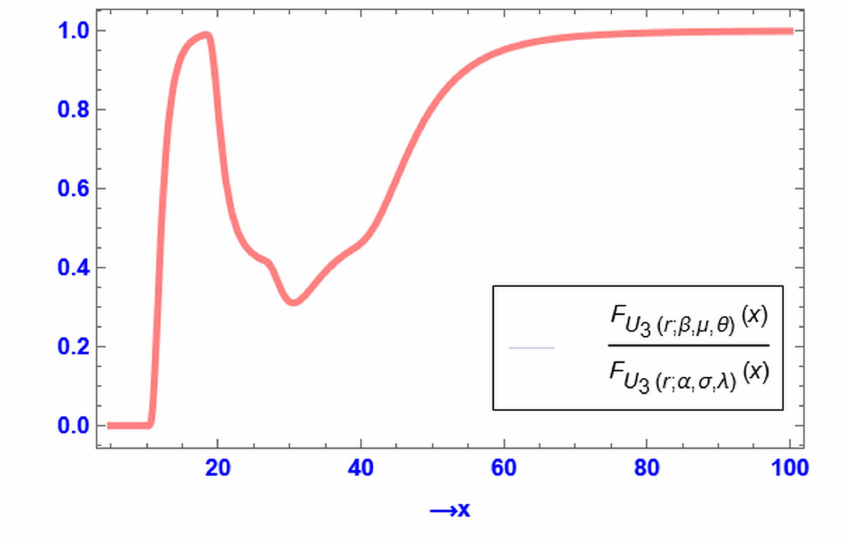}}
\subfigure[]{\label{figure4}\includegraphics[width=3.2in]{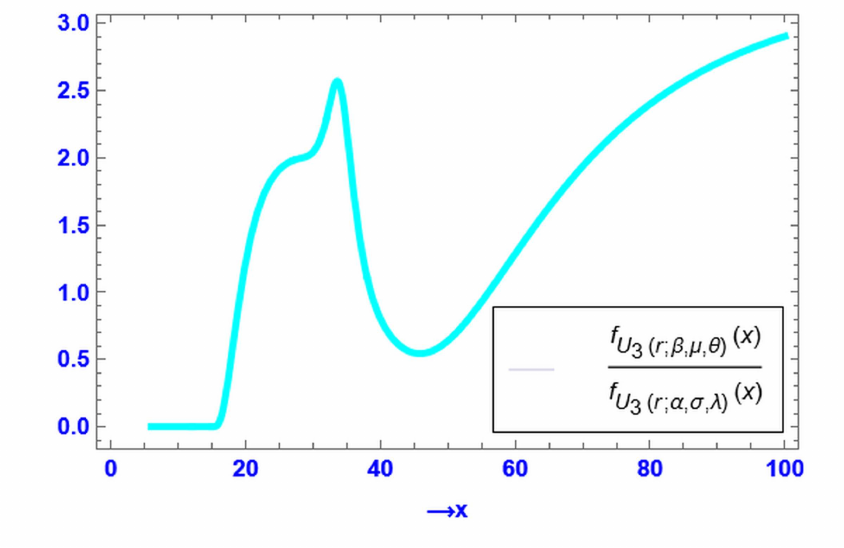}}
\caption{$(a)$ Graph of the ratio between the CDFs of $U_3(\boldsymbol{r};\boldsymbol{\alpha},\boldsymbol{\sigma},\boldsymbol{\lambda})$ and $U_3(\boldsymbol{r};\boldsymbol{\beta},\boldsymbol{\mu},\boldsymbol{\theta})$ in Counterexample \ref{Coun4.2}.
$(b)$ Graph of the ratio between the PDFs of $U_3(\boldsymbol{r};\boldsymbol{\alpha},\boldsymbol{\sigma},\boldsymbol{\lambda})$ and $U_3(\boldsymbol{r};\boldsymbol{\beta},\boldsymbol{\mu},\boldsymbol{\theta})$ in Counterexample \ref{Coun4.22}.}
\end{center}
\end{figure}

As a follow-up, an illustration is given to validate the result in Theorem \ref{theorem3.2}. 
\begin{example}\label{example4.2}
Consider the left truncated exponential model as the baseline distribution with CDF $F(x)=\frac{e^{-1}-e^{-\frac{x}{2}}}{e^{-1}}$, $x\geq2$. Take $\boldsymbol{r}=(0.30,0.50,0.20)$, $\boldsymbol{s}=(0.85,0.05,0.10)$, 
$\boldsymbol{\sigma}=(2,3,1)$, 
$\boldsymbol{\mu}=(5,4,4.5)$,
$\boldsymbol{\lambda}=(3,5,2)$, $\boldsymbol{\theta}=(7,5,6)$,  
$\boldsymbol{\alpha}=(0.1,5.0,1.3)$, and 
$\boldsymbol{\beta}=(5.1,6.0,5.5)$. Obviously, $\max\left\lbrace\alpha_1,\alpha_2,\alpha_3\right\rbrace\leq \min\left\lbrace\beta_1,\beta_2,\beta_3\right\rbrace$, $\max\left\lbrace\sigma_1,\sigma_2,\sigma_3\right\rbrace\leq \min\left\lbrace\mu_1,\mu_2,\mu_3\right\rbrace$, $\max\left\lbrace\lambda_1,\lambda_2,\lambda_3\right\rbrace\leq \min\left\lbrace\theta_1,\theta_2,\theta_3\right\rbrace$, and $x\tilde{h}(x)$ is decreasing in $x\geq2$. Now, the graph of the ratio between the CDFs of  $U_3(\boldsymbol{s};\boldsymbol{\beta},\boldsymbol{\mu},\boldsymbol{\theta})$ and $U_3(\boldsymbol{r};\boldsymbol{\alpha},\boldsymbol{\sigma},\boldsymbol{\lambda})$ is plotted in Figure \ref{figure5}. From this figure, we conclude that the ratio of the CDFs of $U_3(\boldsymbol{s};\boldsymbol{\beta},\boldsymbol{\mu},\boldsymbol{\theta})$ and $U_3(\boldsymbol{r};\boldsymbol{\alpha},\boldsymbol{\sigma},\boldsymbol{\lambda})$ is increasing in $x\geq5$, which confirms the established result in Theorem \ref{theorem3.2}.  
\end{example}
	
The subsequent counterexample demonstrates that the condition 
``$\max\left\{\lambda_1,\lambda_2,\lambda_3\right\}\leq\min\left\{\theta_1,\theta_2, \theta_3\right\}$'' is essential, together with the other stated conditions, to establish the reversed hazard rate ordering between the MRVs as presented in Theorem \ref{theorem3.2}.
	
\begin{counterexample}\label{Coun4.3}
Consider the left truncated exponential distribution as the baseline with CDF $F(x)=\frac{e^{-\frac{2}{5}}-e^{-\frac{x}{5}}}{e^{-\frac{2}{5}}}$, $x\geq2$. Assume that $\boldsymbol{r}=(0.30,0.50,0.20)$, $\boldsymbol{s}=(0.85,0.05,0.10)$, 
$\boldsymbol{\sigma}=(2,3,4)$, 
$\boldsymbol{\mu}=(6,4,4.5)$,
$\boldsymbol{\lambda}=(3,5,2)$, 
$\boldsymbol{\theta}=(1,2,6)$,  
$\boldsymbol{\alpha}=(0.1,5.0,1.3)$, and 
$\boldsymbol{\beta}=(5.1,6.0,5.5)$. It is obvious that $\max\left\lbrace\alpha_1,\alpha_2,\alpha_3\right\rbrace\leq \min\left\lbrace\beta_1,\beta_2,\beta_3\right\rbrace$, $\max\left\lbrace\sigma_1,\sigma_2,\sigma_3\right\rbrace\leq \min\left\lbrace\mu_1,\mu_2,\mu_3\right\rbrace$, $\max\left\lbrace\lambda_1,\lambda_2,\lambda_3\right\rbrace\nleq \min\left\lbrace\theta_1,\theta_2,\theta_3\right\rbrace$ and $x\tilde{h}(x)$ is decreasing in $x\geq2$. Now, the graph of the ratio between the CDFs of  $U_3(\boldsymbol{s};\boldsymbol{\beta},\boldsymbol{\mu},\boldsymbol{\theta})$ and $U_3(\boldsymbol{r};\boldsymbol{\alpha},\boldsymbol{\sigma},\boldsymbol{\lambda})$ is plotted in Figure \ref{figure6}. From the figure, it can be observed that the ratio exhibits non-monotonic behavior for $x\geq8$, thereby contradicting the conclusion stated in Theorem \ref{theorem3.2}.   
\end{counterexample}
	
\begin{figure}[h!]
\begin{center}
\subfigure[]{\label{figure5}\includegraphics[width=3.2in]{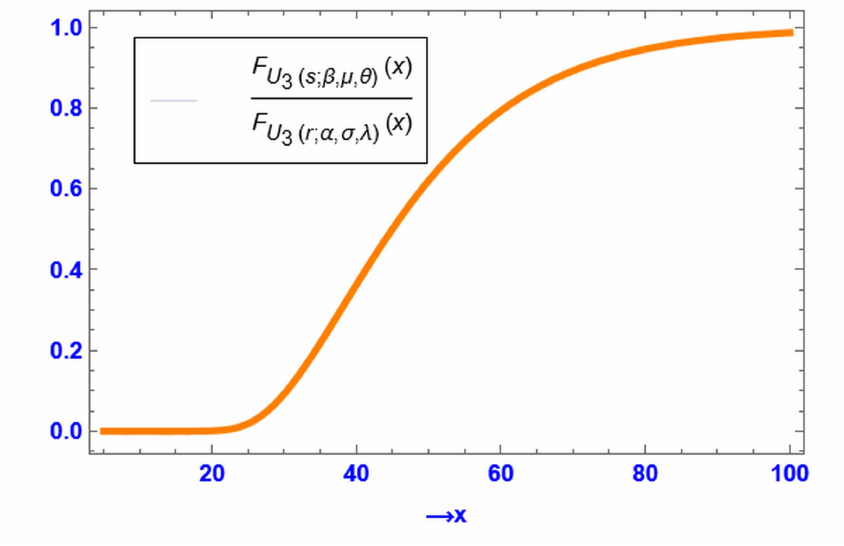}}
\subfigure[]{\label{figure6}\includegraphics[width=3.2in]{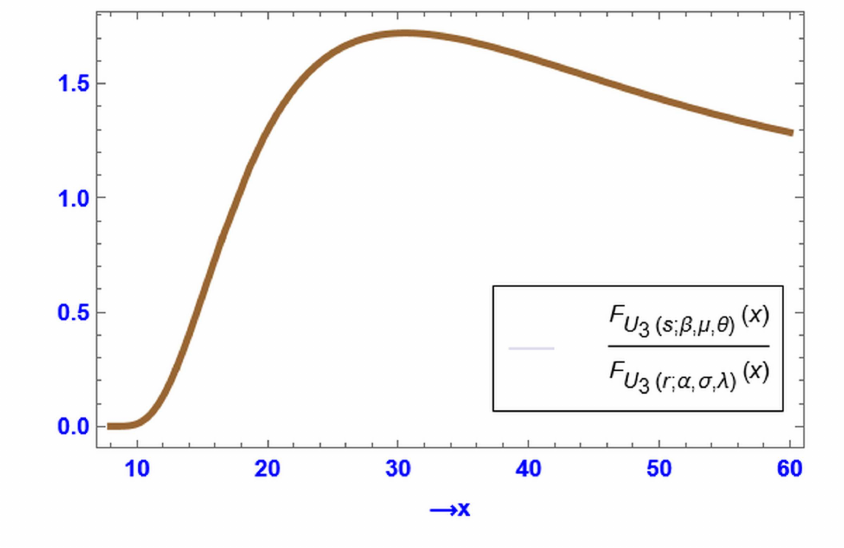}}
\caption{$(a)$ Plot of the ratio between the CDFs of  $U_3(\boldsymbol{s};\boldsymbol{\beta},\boldsymbol{\mu},\boldsymbol{\theta})$ and $U_3(\boldsymbol{r};\boldsymbol{\alpha},\boldsymbol{\sigma},\boldsymbol{\lambda})$ in Example \ref{example4.2}. 
$(b)$ Plot of the ratio between the CDFs of  $U_3(\boldsymbol{s};\boldsymbol{\beta},\boldsymbol{\mu},\boldsymbol{\theta})$ and $U_3(\boldsymbol{r};\boldsymbol{\alpha},\boldsymbol{\sigma},\boldsymbol{\lambda})$ in Counterexample \ref{Coun4.3}.}
\end{center}
\end{figure}
	
The next example provides a verification of the conditions require to prove Theorem \ref{theorem3.3}.
	
\begin{example}\label{example4.3}
Consider the Pareto distribution as the baseline distribution with PDF $f(x)=6(4)^6x^{-7},~x\geq4$. Now, set $\boldsymbol{r}=(0.30,0.20,0.50)$, $\boldsymbol{s}=(0.85,0.05,0.10)$, $\sigma=2$, $\lambda=3$, $\boldsymbol{\alpha}=(6,3,5)$, and 
$\boldsymbol{\beta}=(7,7,6)$. It can be easily seen that $\max\left\lbrace\alpha_1,\alpha_2,\alpha_3\right\rbrace\leq \min\left\lbrace\beta_1,\beta_2,\beta_3\right\rbrace$. We have plotted the graph of the ratio between the PDFs of $U_3(\boldsymbol{s};\boldsymbol{\beta},\sigma,\lambda)$ and $U_3(\boldsymbol{r};\boldsymbol{\alpha},\sigma,\lambda)$ in Figure \ref{figure7} which reveals that the ratio is increasing in $x\geq14$, confirming the result in Theorem \ref{theorem3.3}.  
\end{example}

The counterexample below illustrates that the condition ``$\max\left\{\alpha_1, \alpha_2,\alpha_3\right\}\leq\min\left\{\beta_1,\beta_2,\beta_3\right\}$'' is essential, together with the other specified assumptions, for ensuring the validity of the likelihood ratio order between the MRVs discussed in Theorem \ref{theorem3.3}.
	
\begin{counterexample}\label{Coun4.4}
Consider the Pareto distribution as the baseline distribution with PDF $f(x)=2(3)^2x^{-3}$, $x\geq3$. Now, let us take $\boldsymbol{r}=(0.30,0.20,0.50)$, $\boldsymbol{s}=(0.85,0.05,0.10)$, $\sigma=2$, $\lambda=3$,   $\boldsymbol{\alpha}=(0.2,9,0.1)$, and $\boldsymbol{\beta}=(7,0.7,6)$. It is readily observed that $\max\left\lbrace\alpha_1,\alpha_2,\alpha_3\right\rbrace\nleq \min\left\lbrace\beta_1,\beta_2,\beta_3\right\rbrace$. Here, the plot in Figure \ref{figure8}, which depicts the ratio of the PDFs of $U_3(\boldsymbol{s};\boldsymbol{\beta},\sigma,\lambda)$ and $U_3(\boldsymbol{r};\boldsymbol{\alpha},\sigma,\lambda)$, shows that the ratio becomes non-monotonic for $x\geq11$, thereby contradicting Theorem \ref{theorem3.3}.   
\end{counterexample}
	
\begin{figure}[h!]
\begin{center}
\subfigure[]{\label{figure7}\includegraphics[width=3.2in]{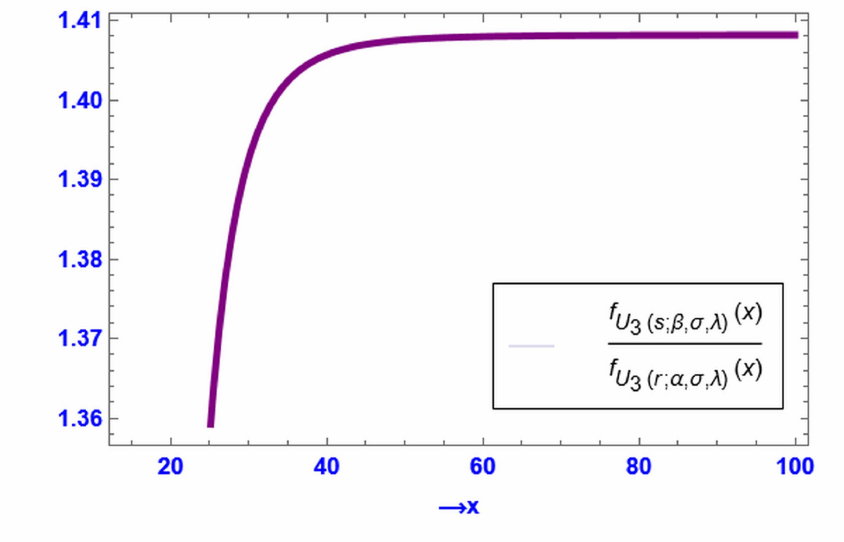}}
\subfigure[]{\label{figure8}\includegraphics[width=3.2in]{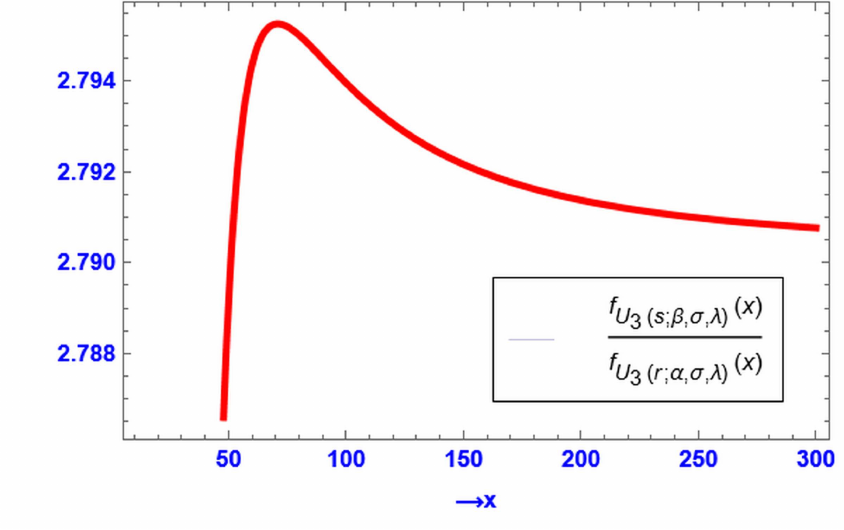}}
\caption{$(a)$ Graph of the ratio between the PDFs of $U_3(\boldsymbol{s};\boldsymbol{\beta},\sigma,\lambda)$ and $U_3(\boldsymbol{r};\boldsymbol{\alpha},\sigma,\lambda)$ in Example \ref{example4.3}. 
$(b)$ Graph of the ratio between the PDFs of $U_3(\boldsymbol{s};\boldsymbol{\beta},\sigma,\lambda)$ and $U_3(\boldsymbol{r};\boldsymbol{\alpha},\sigma,\lambda)$ in Counterexample \ref{Coun4.4}.}
\end{center}
\end{figure}

Next, the following example provides an illustration of the established result in Theorem \ref{theorem3.4}.

\begin{example}\label{example4.4}
Assume the Benktander type-II distribution as the baseline model with CDF $F(x)=1-x^{-0.2}e^{\frac{5}{0.8}(1-x^{0.8})},~x\geq1$. Now, set $\boldsymbol{r}=(0.6,0.3,0.1)$, $\boldsymbol{s}=(0.4,0.4,0.2)$, $\sigma=2$, $\mu=4$, $\lambda=3$, $\theta=5$, $\boldsymbol{\alpha}=(5,6,14)$, and $\boldsymbol{\beta}=(6,9,10)$. It is evident that $\boldsymbol{r},\boldsymbol{s}\in\mathcal{D}_3^+$, $\boldsymbol{\alpha},\boldsymbol{\beta}\in\mathcal{E}_3^+$, ${\boldsymbol{r}}\stackrel{m}{\succcurlyeq}{\boldsymbol{s}}$,  ${\boldsymbol{\alpha}}\stackrel{m}{\succcurlyeq}{\boldsymbol{\beta}}$, $\sigma\leq\mu$,  and $\lambda\leq\theta$. From Figure \ref{figure9}, it is clear that $U_3(\boldsymbol{r};\boldsymbol{\alpha},\sigma,\lambda)\leq_{st}U_3(\boldsymbol{r};\boldsymbol{\beta},\mu,\theta)$, validating the established result in Theorem \ref{theorem3.4}.  
\end{example}
	
In the following counterexample, we illustrate that the condition ``$\boldsymbol{\alpha}\stackrel{m}\succcurlyeq\boldsymbol{\beta}$'' plays a significant role, together with the other conditions in Theorem \ref{theorem3.4}, in establishing the usual stochastic order between two MRVs.
	
\begin{counterexample}\label{Coun5.6}
Consider the Benktander type-II distribution as the baseline distribution with CDF $F(x)=1-x^{-0.7}e^{\frac{2}{0.3}(1-x^{0.3})},~x\geq1$. Now, we choose $\boldsymbol{r}=(0.8,0.1,0.1)$, $\boldsymbol{s}=(0.5,0.3,0.2)$, $\sigma=3$, $\mu=4$, $\lambda=7$, $\theta=7$, $\boldsymbol{\alpha}=(2,4,5)$, and 
$\boldsymbol{\beta}=(1,1.5,3)$. It is easy to see that $\boldsymbol{r},\boldsymbol{s}\in\mathcal{D}_3^+$, $\boldsymbol{\alpha},\boldsymbol{\beta}\in\mathcal{E}_3^+$, $\boldsymbol{r}\stackrel{m}\succcurlyeq\boldsymbol{s}$,  $\boldsymbol{\alpha}\stackrel{m}\not\succcurlyeq\boldsymbol{\beta}$, $\sigma\leq\mu$,  and $\lambda\leq\theta$. From Figure \ref{figure10}, one can observe that 
$U_3(\boldsymbol{r};\boldsymbol{\alpha},\sigma,\lambda)\nleq_{st}U_3(\boldsymbol{r};\boldsymbol{\beta},\mu,\theta)$, which contradicts the assertion of Theorem \ref{theorem3.4}.  
\end{counterexample}

\begin{figure}[h!]
\begin{center}
\subfigure[]{\label{figure9}\includegraphics[width=3.2in]{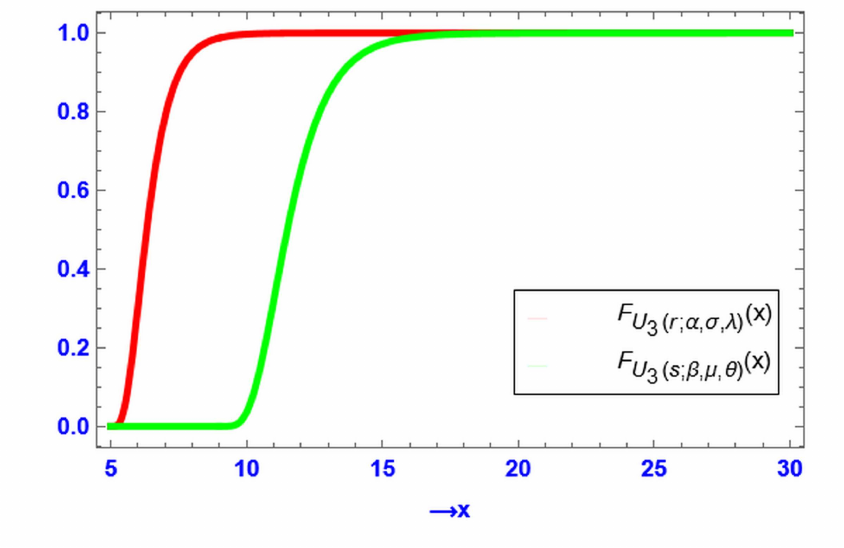}}
\subfigure[]{\label{figure10}\includegraphics[width=3.2in]{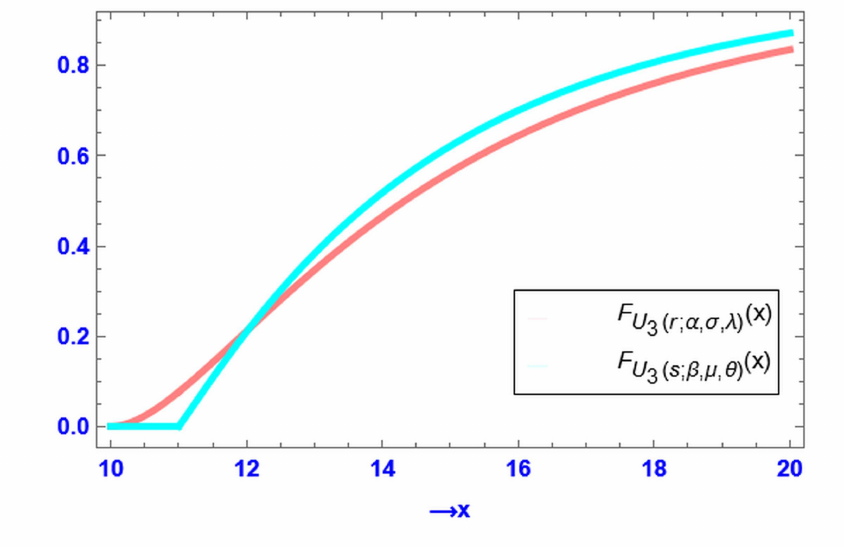}}
\caption{$(a)$ Plots of the CDFs of $U_3(\boldsymbol{r};\boldsymbol{\alpha},\sigma,\lambda)$ (red curve) and $U_3(\boldsymbol{s};\boldsymbol{\beta},\mu,\theta)$ (green curve) in Example \ref{example4.4}. 
$(b)$ Plots of the CDFs of $U_3(\boldsymbol{r};\boldsymbol{\alpha},\sigma,\lambda)$ (pink curve) and $U_3(\boldsymbol{s};\boldsymbol{\beta},\mu,\theta)$ (cyan curve) in Counterexample \ref{Coun5.6}.}
\end{center}
\end{figure}
	
To illustrate Theorem \ref{theorem4.1}, we consider the next example.
\begin{example}\label{example5.5}
Consider the left truncated Burr type-XII distribution as the baseline distribution with CDF $F(x)=\frac{(1+2^{1.5})^{-5}-(1+x^{1.5})^{-5}}{(1+2^{1.5})^{-5}},~x\geq2$. Further, assume that $n_1=25$, $n_2=8$, $n_1^*=15$, $n_2^*=20$, $r_1=0.032$, $r_2=0.025$, $s_1=0.020$, $s_2=0.035$, $\boldsymbol{\sigma}=(5,10)$, $\boldsymbol{\lambda}=(4,6)$, and $\boldsymbol{\alpha}=(2.3,4)$. Here, $n_1r_1n_2^*s_2=0.56\geq0.06=n_2r_2n_1^*s_1$, and $t\tilde{h}(t)$ is decreasing in $t\geq2$. One can clearly observe that $\boldsymbol{\alpha},\boldsymbol{\lambda},\boldsymbol{\sigma}\in\mathcal{E}_2^+$. Figure \ref{figure11} confirms that $U_n(\boldsymbol{r};\boldsymbol{\alpha},\boldsymbol{\sigma},\boldsymbol{\lambda})$ is less than or equal to $U_{n^*}(\boldsymbol{s};\boldsymbol{\alpha},\boldsymbol{\sigma},\boldsymbol{\lambda})$ in the reverse hazard rate order, validating the assertion of Theorem \ref{theorem4.1}. \end{example}
	
The next counterexample shows the importance of the conditions $n_1r_1n_2^*s_2\geq n_2r_2n_1^*s_1$ and $\boldsymbol{\alpha}\in\mathcal{E}_2^+$ to establish the reversed hazard rate ordering between two MRVs in Theorem \ref{theorem4.1}. 
	
\begin{counterexample}\label{Coun5.7}
Let the left truncated Burr type-XII distribution as the baseline distribution with CDF $F(x)=\frac{(1+3^{2})^{-1}-(1+x^{2})^{-1}}{(1+3^{2})^{-1}},~x\geq3$. Suppose we take $n_1=10$, $n_2=10$, $n_1^*=7$, $n_2^*=3$, $r_1=0.03$, $r_2=0.04$, $s_1=0.01$, $s_2=0.01$, $\boldsymbol{\sigma}=(4,9)$, $\boldsymbol{\lambda}=(0.6,8)$, and
$\boldsymbol{\alpha}=(6,2)$. Here, $n_1r_1n_2^*s_2=0.09\ngeq0.28=n_2r_2n_1^*s_1$, and $t\tilde{h}(t)$ is decreasing in $t\geq3$. It follows directly that $\boldsymbol{\lambda},\boldsymbol{\sigma}\in\mathcal{E}_2^+$ but $\boldsymbol{\alpha}\not\in\mathcal{E}_2^+$. As shown in Figure \ref{figure12}, the reverse hazard rate order relation $U_n(\boldsymbol{r};\boldsymbol{\alpha},\sigma,\lambda)\nleq_{rh}U_{n^*}(\boldsymbol{s};\boldsymbol{\alpha},\boldsymbol{\sigma},\boldsymbol{\lambda})$ does not hold, opposing the conclusion of Theorem \ref{theorem4.1}.  
\end{counterexample}

\begin{figure}[h!]
\begin{center}
\subfigure[]{\label{figure11}\includegraphics[width=3.2in]{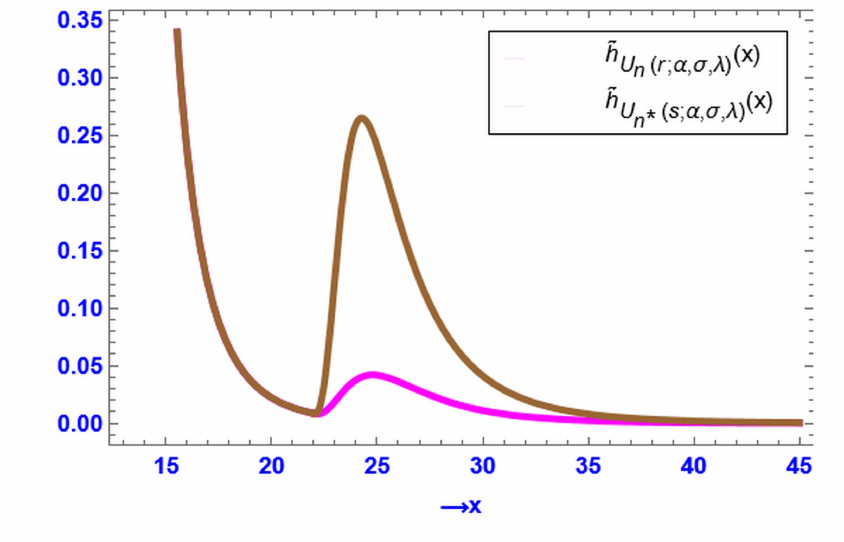}}
\subfigure[]{\label{figure12}\includegraphics[width=3.2in]{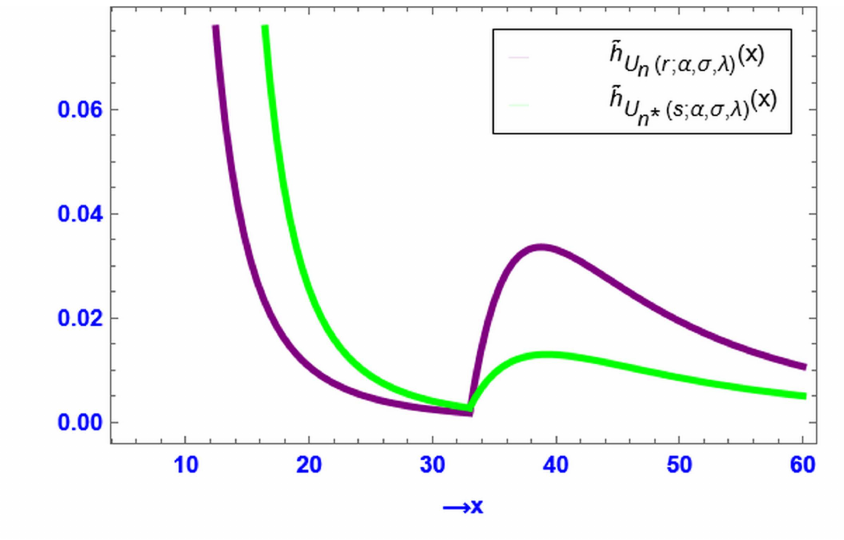}}
\caption{$(a)$ Plots of the reversed hazard rates of $U_n(\boldsymbol{r};\boldsymbol{\alpha},\sigma,\lambda)$ (magenta curve) and $U_{n^*}(\boldsymbol{s};\boldsymbol{\alpha},\sigma,\lambda)$ (brown curve) in Example \ref{example5.5}. 
$(b)$ Plots of the reversed hazard rates of $U_n(\boldsymbol{r};\boldsymbol{\alpha},\boldsymbol{\sigma},\boldsymbol{\lambda})$ (purple curve) and $U_{n^*}(\boldsymbol{s};\boldsymbol{\alpha},\boldsymbol{\sigma},\boldsymbol{\lambda})$ (green curve) in Counterexample \ref{Coun5.7}.}
\end{center}
\end{figure}
	
The next example provides an illustration of Theorem \ref{theorem4.2}.
	
\begin{example}\label{example5.6}
Consider the left truncated Lomax distribution as the baseline distribution with CDF $F(x)=\frac{(1+6)^{-5}-(1+x)^{-5}}{(1+6)^{-5}},~x\geq6$. Set $n_1=15$, $n_2=5$, $n_1^*=10$, $n_2^*=20$, $r_1=0.04$, $r_2=0.08$, $s_1=0.08$, $s_2=0.01$, $\boldsymbol{\sigma}=(3,4)$, $\boldsymbol{\lambda}=(1,2)$, and $\boldsymbol{\alpha}=(2,4)$. Here, $n_1r_1n_2^*s_2=0.12\leq0.32=n_2r_2n_1^*s_1$, $t\tilde{h}(t)$, and $\frac{tf^{\prime}(t)}{f(t)}$ is decreasing in $t\geq6$. From this, it is obvious that $\boldsymbol{\alpha},\boldsymbol{\lambda},\boldsymbol{\sigma}\in\mathcal{E}_3^+$ and $\alpha_1,\alpha_2\geq1$. In this case, we have plotted the graph of the ratio between the PDFs of  $U_n(\boldsymbol{r};\boldsymbol{\alpha},\boldsymbol{\sigma},\boldsymbol{\lambda})$ and $U_{n^*}(\boldsymbol{s};\boldsymbol{\alpha},\boldsymbol{\sigma},\boldsymbol{\lambda})$ in Figure \ref{figure13}, which reveals that the ratio is increasing in $x\geq6$, confirming the result in Theorem \ref{theorem4.2}.  
\end{example}
	
We now present a counterexample to emphasize that the condition ``$\alpha_1,\alpha_2\geq 1$'' is required for the result in Theorem \ref{theorem4.2}.
	
\begin{counterexample}\label{Coun5.8}
Consider the left truncated Lomax distribution as the baseline distribution with CDF $F(x)=\frac{(1+2)^{-3}-(1+x)^{-3}}{(1+2)^{-3}},~x\geq2$. Take $n_1=10$, $n_2=25$, $n_1^*=7$, $n_2^*=3$, $r_1=0.05$, $r_2=0.02$, $s_1=0.01$, $s_2=0.01$, $\boldsymbol{\sigma}=(3,4)$, $\boldsymbol{\lambda}=(2,4)$, and $\boldsymbol{\alpha}=(0.2,0.7)$. Here, $n_1r_1n_2^*s_2=0.15\leq0.35=n_2r_2n_1^*s_1$, $t\tilde{h}(t)$, and $\frac{tf^{\prime}(t)}{f(t)}$ is decreasing in $t\geq2$. It is obvious that $\boldsymbol{\alpha},\boldsymbol{\lambda},\boldsymbol{\sigma}\in\mathcal{E}_3^+$ and $\alpha_1,\alpha_2\ngeq1$. The graph in Figure \ref{figure14} illustrates the ratio of the PDFs between $U_n(\boldsymbol{r};\boldsymbol{\alpha},\boldsymbol{\sigma},\boldsymbol{\lambda})$ and $U_{n^*}(\boldsymbol{s};\boldsymbol{\alpha},\boldsymbol{\sigma},\boldsymbol{\lambda})$, revealing non-monotonicity for $x\geq2$. This observation shows that the likelihood ratio order described in Theorem \ref{theorem4.2} does not hold. 
\end{counterexample}
	
\begin{figure}[h!]
\begin{center}
\subfigure[]{\label{figure13}\includegraphics[width=3.2in]{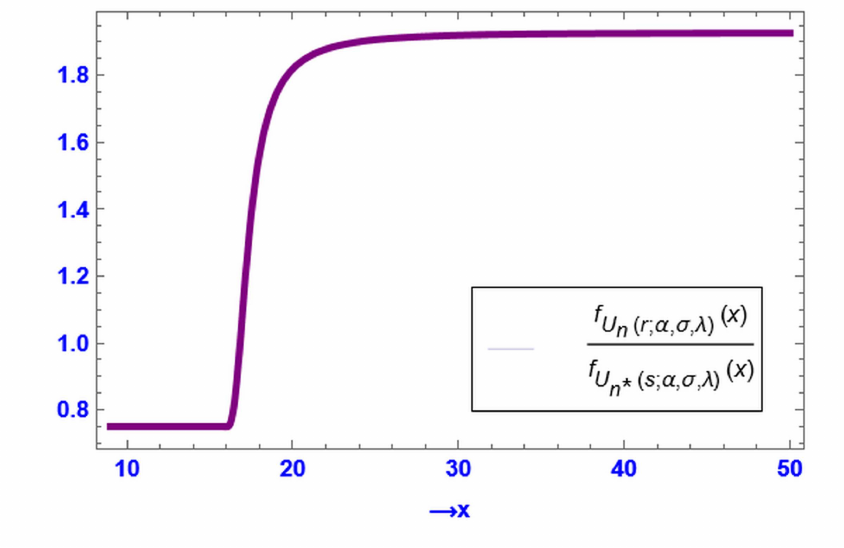}}
\subfigure[]{\label{figure14}\includegraphics[width=3.2in]{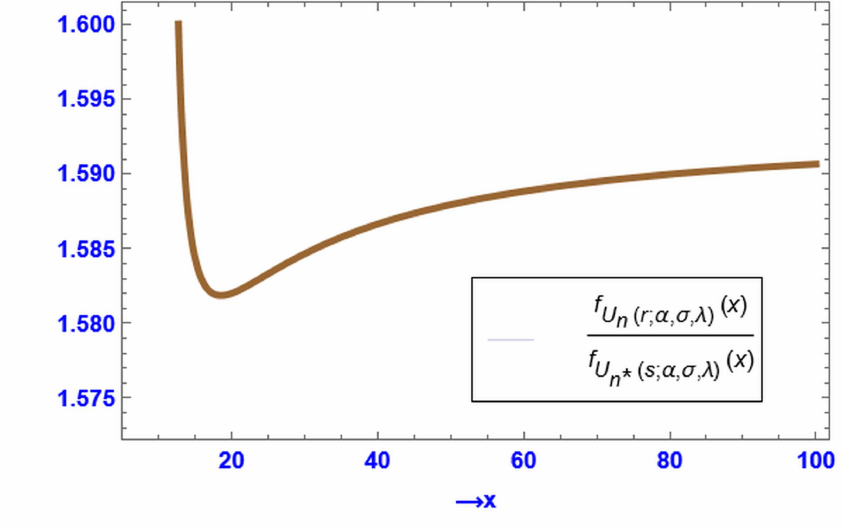}}
\caption{$(a)$ Plot of the ratio of the PDFs of $U_n(\boldsymbol{r};\boldsymbol{\alpha},\boldsymbol{\sigma},\boldsymbol{\lambda})$ and $U_{n^*}(\boldsymbol{s};\boldsymbol{\alpha},\boldsymbol{\sigma},\boldsymbol{\lambda})$  in Example \ref{example5.6}. 
$(b)$ Graph of the ratio of the PDFs of $U_n(\boldsymbol{r};\boldsymbol{\alpha},\boldsymbol{\sigma},\boldsymbol{\lambda})$ and $U_{n^*}(\boldsymbol{s};\boldsymbol{\alpha},\boldsymbol{\sigma},\boldsymbol{\lambda})$  in Counterexample \ref{Coun5.8}.}
\end{center}
\end{figure}
	
The next example provides an illustration of Theorem \ref{theorem4.3}.
	
\begin{example}\label{example5.7}
Consider the log-logistic distribution with CDF $F(x)=\frac{x^{0.9}}{(1+x^{0.9})}$, $x\geq0$ as the baseline model. Take $n_1=10$, $n_2=8$, $n_1^*=20$, $n_2^*=15$, $r_1=0.02$, $r_2=0.10$, $s_1=0.02$, $s_2=0.04$, $\sigma=6$, $\mu=4$, $\boldsymbol{\lambda}=(4,6)$, $\boldsymbol{\theta}=(3,2)$, $\alpha=0.3$. Here, it can be shown that $t\tilde{h}(t)$ is decreasing and $\frac{f^{\prime}(t)}{f(t)}$ is increasing in $t\geq0$. Further, $\sigma\geq\mu$, $\min\{\lambda_1,\lambda_2\}\geq\max\{\theta_1,\theta_2\}$, and $0<\alpha\leq1$. Under this setting, we have plotted the graph of the ratio between reversed hazard rates of  $U_n(\boldsymbol{r};\alpha,\sigma,\boldsymbol{\lambda})$ and $V_{n^*}(\boldsymbol{s};\alpha,\mu,\boldsymbol{\theta})$ in Figure \ref{figure15}, revealing that the ratio is decreasing in $x\geq6$. Thus, Theorem \ref{theorem4.3} is validated.  
\end{example}
	
In the upcoming counterexample, we demonstrate that the condition ``$\frac{f^{\prime}(t)}{f(t)}$ is increasing for $t\geq0$'' plays a key role in obtaining the result in Theorem \ref{theorem4.3}.
	
\begin{counterexample}\label{Coun5.9}
Consider the log-logistic distribution as the baseline distribution with CDF $F(x)=\frac{x^{4}}{(1+x^{4})}$, $x\geq0$. Take $n_1=4$, $n_2=6$, $n_1^*=10$, $n_2^*=25$, $r_1=0.1$, $r_2=0.1$, $s_1=0.05$, $s_2=0.02$, $\sigma=5$, $\mu=2$, $\boldsymbol{\lambda}=(3,7)$, $\boldsymbol{\theta}=(2,1)$, $\alpha=0.8$. Here,  $t\tilde{h}(t)$ is decreasing and $\frac{f^{\prime}(t)}{f(t)}$ is not increasing in $t\geq0$. It may readily be concluded that $\sigma\geq\mu$, $\min\{\lambda_1,\lambda_2\}\geq\max\{\theta_1,\theta_2\}$, and $0<\alpha\leq1$. Figure \ref{figure16} presents the graph of the ratio between the reversed hazard rates of $U_n(\boldsymbol{r};\alpha,\sigma,\boldsymbol{\lambda})$ and $V_{n^*}(\boldsymbol{s};\alpha,\mu,\boldsymbol{\theta})$, which is observed to be non-monotone for $x\geq5$. This observation opposes the result stated in Theorem \ref{theorem4.3}. 
\end{counterexample}
	
\begin{figure}[h!]
\begin{center}
\subfigure[]{\label{figure15}\includegraphics[width=3.2in]{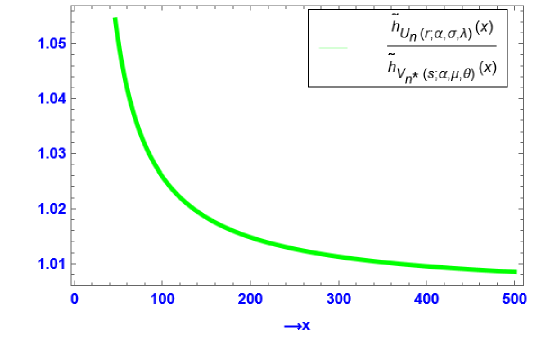}}
\subfigure[]{\label{figure16}\includegraphics[width=3.2in]{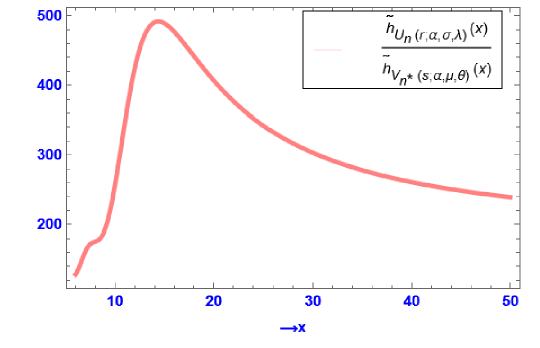}}
\caption{$(a)$ Graphical representation of the ratio between RHRFs of $U_n(\boldsymbol{r};\alpha,\sigma,\boldsymbol{\lambda})$ and $V_{n^*}(\boldsymbol{s};\alpha,\mu,\boldsymbol{\theta})$ in Example \ref{example5.7}. 
$(b)$ Plot of the ratio of the RHRFs of $U_n(\boldsymbol{r};\alpha,\sigma,\boldsymbol{\lambda})$ and $V_{n^*}(\boldsymbol{s};\alpha,\mu,\boldsymbol{\theta})$ in Counterexample \ref{Coun5.9}.}
\end{center}
\end{figure}
	
\section{Concluding remarks}\label{section6}
In this paper, we have considered single-outlier and multiple-outlier FMMs and obtain several ordering results between them. We assumed general family of ELS models for the components of the FMM. Here, Several sufficient conditions have been derived when two single-outlier FMMs are compared in the sense of the usual stochastic order, reversed hazard rate order, and likelihood ratio order. The concept of majorization order has been used in this purpose. Further, the reversed hazard rate order, likelihood ratio order, and AFO in terms of the reversed hazard rate have been established between two multiple-outlier FMMs. To validate the established results, we have provided examples. Few counterexamples have also been presented to show their requirement in establishing the theoretical findings.  
	
Note that our paper is mainly theoretical. However, it is possible to find applications of the established results in reliability practice. To illustrate, consider the following example.
	
Consider two identical components (systems) made by different manufacturers and having several reliability characteristics. Since each of the components are not statistically identical, one can operate in $n$ operational regimes with correspondingly different probabilities $r_i$ and $s_i$, $i\in\mathcal{I}_n$. Assume that the lifetimes of the components in each regime be characterized by distinct distributions with the same functional form as in Theorem \ref{theorem3.4}. The main question is: which of the two components will function better in an appropriate stochastic sense? Upon applying Theorem \ref{theorem3.4}, under the established sufficient conditions, we can thus get the conclusion that the second system performs better than the first system. Similar applications can be found for the other established results.      
	
In addition, the ELS models, when applied within FMMs, offer enhanced flexibility for modeling complex and heterogeneous data. By introducing an additional shape parameter. However, ELS models can better capture skewness, heavy tails, and multi-modal behavior commonly observed in real-world datasets. This results in improved data fitting, more accurate predictions, and greater robustness in applications such as reliability analysis, survival studies, finance, and risk modeling. Moreover, the stochastic properties of ELS mixtures support meaningful comparisons and statistical inference, making them valuable tools for decision-making across various applied fields.      
\\
\\
{\bf Acknowledgements:} Raju Bhakta would like to acknowledge NIIT University, NH-8, Delhi-Jaipur Highway, Neemrana, Rajasthan-301705, India, for providing the necessary research facilities, academic resources, and a supportive environment conducive to scholarly work. The infrastructure and institutional encouragement offered by the university were invaluable in facilitating the successful completion of this research. 
\\
\\
{\bf Conflict of interest statement:} The authors declare that they do not have any conflict of interests.
\normalsize
\bibliography{ref}	
\end{document}